\DeclareMathOperator{\supp}{supp}
\DeclareMathOperator{\alg}{alg}
\DeclareMathOperator{\red}{red}
\DeclareMathOperator{\Av}{Av}
\DeclareMathOperator{\ind}{index}
\DeclareMathOperator{\End}{End}
\DeclareMathOperator{\Spin}{Spin}
\DeclareMathOperator{\restr}{restr}
\newcommand{\beq}[1]{\begin{equation} \label{#1}}
\newcommand{\eeq}{\end{equation}}
\newcommand{\bea}{\begin{eqnarray}}
\newcommand{\eea}{\end{eqnarray}}
\begin{document}
	\pagestyle{plain}
	\theoremstyle{plain}
	\newtheorem{theorem}{Theorem}[section]
	\newtheorem{thm}{Theorem}[section]
	\newtheorem{lemma}[theorem]{Lemma}
	\newtheorem{proposition}[theorem]{Proposition}
	\newtheorem{assumption}[theorem]{Assumption}
	\newtheorem{prop}[theorem]{Proposition}
	\newtheorem{corollary}[theorem]{Corollary}
	\newtheorem{conjecture}[theorem]{Conjecture}
	
	\theoremstyle{definition}
	\newtheorem{definition}[theorem]{Definition}
	\newtheorem{defn}[theorem]{Definition}
	\newtheorem{example}[theorem]{Example}
	\newtheorem{remark}[theorem]{Remark}
	\newtheorem{rem}[theorem]{Remark}
	\newtheoremstyle{note}
	{}
	{}
	{\itshape}
	{}
	{\bfseries}
	{.}
	{ }
	{}
	\theoremstyle{note}
	\newtheorem*{theoremmain}{Theorem \ref{thm:main}}
	
	\newcommand{\C}{\mathbb{C}}
	\newcommand{\R}{\mathbb{R}}
	\newcommand{\Z}{\mathbb{Z}}
	\newcommand{\N}{\mathbb{N}}
	
	\newcommand{\Supp}{{\rm Supp}}
	
	\newcommand{\field}[1]{\mathbb{#1}}
	\newcommand{\bZ}{\field{Z}}
	\newcommand{\bR}{\field{R}}
	\newcommand{\bC}{\field{C}}
	\newcommand{\bN}{\field{N}}
	\newcommand{\bT}{\field{T}}
	
	\newcommand{\cB}{{\mathcal{B} }}
	\newcommand{\cK}{{\mathcal{K} }}
	\newcommand{\cF}{{\mathcal{F} }}
	\newcommand{\cO}{{\mathcal{O} }}
	\newcommand{\cE}{\mathcal{E}}
	\newcommand{\cS}{\mathcal{S}}
	\newcommand{\calL}{\mathcal{L}}
	
	\newcommand{\HH}{{\mathcal{H} }}
	\newcommand{\tilH}{\widetilde{\HH}}
	\newcommand{\HX}{\HH_X}
	\newcommand{\Hpi}{\HH_{\pi}}
	\newcommand{\HHpi}{\HH \otimes \HH_{\pi}}
	\newcommand{\Ltwopi}{L^2_{\pi}(X, \HHpi)}
	
	\newcommand{\KK}{K\!K}

	\newcommand{\D}{D \hspace{-0.27cm }\slash}
	\newcommand{\Dsmall}{D \hspace{-0.19cm }\slash}
	
	\newcommand{\mybigwedge}{\textstyle{\bigwedge}}
	\newenvironment{proofof}[1]
	{\noindent \emph{Proof of #1.}}{\hfill $\square$}

	\newcommand{\CGmax}{C^*_{G, \max}}
	\newcommand{\DGmax}{D^*_{G, \max}}
	\newcommand{\CGred}{C^*_{G, \red}}
	\newcommand{\CGalg}{C^*_{G, \alg}}
	\newcommand{\DGalg}{D^*_{G, \alg}}
	\newcommand{\CGker}{C^*_{G, \ker}}
	\newcommand{\Cmax}{C^*_{\max}}
	\newcommand{\Dmax}{D^*_{\max}}
	\newcommand{\Cred}{C^*_{\red}}
	\newcommand{\Calg}{C^*_{\alg}}
	\newcommand{\Dalg}{D^*_{\alg}}
	\newcommand{\Cker}{C^*_{\ker}}
	\newcommand{\tilCalg}{\widetilde{C}^*_{\alg}}
	\newcommand{\Cpiker}{C^*_{\pi, \ker}}
	\newcommand{\Cpialg}{C^*_{\pi, \alg}}
	\newcommand{\Cpimax}{C^*_{\pi, \max}}
	
	\newcommand{\Avpi}{\Av^{\pi}}
	
	\newcommand{\Gtc}{\Gamma^{\infty}_{tc}}
	\newcommand{\tilD}{\widetilde{D}}
	\newcommand{\XoneH}{X^{\HH}_1}
	
	\newcommand{\XX}{\mathfrak{X}}
	
	\def\kt{\mathfrak{t}}
	\def\kk{\mathfrak{k}}
	\def\kp{\mathfrak{p}}
	\def\kg{\mathfrak{g}}
	\def\kh{\mathfrak{h}}

	\newcommand{\pilamrho}{[\pi_{\lambda+\rho}]}
	
	\newcommand{\Trestr}{\mathcal{T}_{\restr}}
	\newcommand{\TdN}{\mathcal{T}_{d_N}}
	
	\newcommand{\omG}{\om/\hspace{-1mm}/G}
	\newcommand{\om}{\omega} \newcommand{\Om}{\Omega}
	
	\newcommand{\QcwR}{quantization commutes with reduction}
	
	\newcommand{\Spinc}{\Spin^c}
	
	\def\kt{\mathfrak{t}}
	\def\kk{\mathfrak{k}}
	\def\kp{\mathfrak{p}}
	\def\kg{\mathfrak{g}}
	\def\kh{\mathfrak{h}}
	
	\newcommand{\ddt}{\left. \frac{d}{dt}\right|_{t=0}}
	\newcommand{\norm}[1]{\left\lVert#1\right\rVert}
	\newcommand{\normbig}[1]{\big\lVert#1\big\rVert}
	
	\newcommand{\Todo}{\color{red}\textbf{TO DO}\color{black}}
	
	\title{A Lichnerowicz Vanishing Theorem for the Maximal Roe Algebra}

\author{Hao Guo}
\address[Hao Guo]{ Department of Mathematics, Texas A\&M University }
\email{haoguo@math.tamu.edu}
\thanks{The first author is partially supported by NSF 2000082.}

\author{Zhizhang Xie}
\address[Zhizhang Xie]{ Department of Mathematics, Texas A\&M University }
\email{xie@math.tamu.edu}
\thanks{The second author is partially supported by NSF 1800737 and NSF 1952693.}

\author{Guoliang Yu}
\address[Guoliang Yu]{ Department of
	Mathematics, Texas A\&M University}
\email{guoliangyu@math.tamu.edu}
\thanks{The third author is partially supported by NSF 1700021, NSF 2000082, and the Simons Fellows Program.	}

\subjclass[2010]{46L80, 58B34, 53C20}

	\date{}
	\maketitle
	\begin{abstract}
	We show that if a countable discrete group acts properly and isometrically on a spin manifold of bounded Riemannian geometry and uniformly positive scalar curvature, then, under a suitable condition on the group action, the maximal higher index of the Dirac operator vanishes in $K$-theory of the maximal equivariant Roe algebra. The group action is 
	\emph{not} assumed to be cocompact. A key step in the proof is to establish a functional calculus for the Dirac operator in the maximal equivariant 
\emph{uniform} Roe algebra. This allows us to prove vanishing of the index of the Dirac operator in $K$-theory of this algebra, which in turn yields the result for the maximal higher index.

	\end{abstract}
	\tableofcontents
	\section{Introduction}
	\label{sec:intro}
	The connection between index theory and the existence question for metrics of positive scalar curvature on spin manifolds goes back to the classical version of the Lichnerowicz vanishing theorem \cite{Lichnerowicz}, which states that if a closed spin manifold admits a metric of positive scalar curvature, then the Fredholm index of the Dirac operator vanishes. By taking into account the fundamental group, one can define a more refined invariant called the \emph{higher index}. For cocompact group actions, this index takes values in the $K$-theory of the (reduced or maximal) group $C^*$-algebra. By a pioneering result of Rosenberg \cite{Rosenberg3}, if a closed spin manifold admits a positive scalar curvature metric, then the higher index of the Dirac operator lifted to the universal cover vanishes.
					
	For group actions that are not necessarily cocompact, the higher index of the Dirac operator takes values in the $K$-theory of the equivariant Roe algebra \cite{Roetopology}, which for cocompact manifolds is isomorphic to the $K$-theory of the \emph{reduced} group $C^*$-algebra. In \cite{GWY}, Gong, Wang, and Yu introduced a version of this index that takes values in the $K$-theory of the \emph{maximal} equivariant Roe algebra, in analogy to the situation for group $C^*$-algebras. The maximal equivariant Roe algebra is well-defined when the manifold and group action satisfy certain geometric conditions that we shall make precise. One of the advantages of working with the maximal version of the higher index is that it enjoys better functoriality properties than its reduced counterpart, making its $K$-groups more computable in some cases \cite{Xiaoman, Oyono}. The maximal higher index is used in an essential way in the recent work \cite{CWY} of Chang, Weinberger, and Yu, who establish a new index theory for non-compact manifolds and use it to provide examples of manifolds with exotic scalar curvature behavior.
	
	A basic notion on which the results of \cite{CWY} rest is that the maximal higher index of the spin-Dirac operator vanishes in the presence of uniformly positive scalar curvature. However, it was pointed out in \cite{SchickStudent} that, compared to the reduced setting, a Lichnerowicz-type argument for such a vanishing result needs to be carried out with more care, due to analytical difficulties that arise in connection with the maximal equivariant Roe algebra.
	
	Our intention in this paper is to prove that, under a natural geometric assumption on the group action, which in particular in satisfied in the setting considered in \cite{GWY}, this index does indeed vanish in the presence of uniformly positive scalar curvature. More precisely, we prove:	

	\begin{theorem}
		\label{thm:main}
		Let $(M^n,g)$ be a Riemannian manifold with bounded Riemannian geometry and $\Gamma$ a countable discrete group acting properly and isometrically on $M$, satisfying Assumption \ref{ass:condition}. Suppose that $M$ has a $\Gamma$-equivariant spin structure, with $S$ and $D$ the spinor bundle and Dirac operator respectively. If $M$ has uniformly positive scalar curvature, then
		$$\ind_\textnormal{max}(D)=0\in K_n(C^*_{\textnormal{max}}(M)^\Gamma),$$
		where $C^*_{\textnormal{max}}(M)^\Gamma$ is the maximal equivariant Roe algebra of $M$.
	\end{theorem}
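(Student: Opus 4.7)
The strategy is to reduce to the maximal equivariant \emph{uniform} Roe algebra $C^*_{u,\max}(M)^\Gamma$. There should be a natural $\ast$-homomorphism $C^*_{u,\max}(M)^\Gamma \to C^*_{\max}(M)^\Gamma$ under which the higher index class of $D$ is preserved, so it suffices to prove vanishing of the lifted class in $K_n(C^*_{u,\max}(M)^\Gamma)$. The crucial analytic ingredient for this is a maximal-norm functional calculus $f \mapsto f(D) \in C^*_{u,\max}(M)^\Gamma$, defined at least on a class of functions large enough to include a normalizing function for the Dirac operator.

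I would construct this functional calculus from finite propagation speed. For any $f$ whose Fourier transform $\widehat{f}$ is smooth and compactly supported, the operator
\[
f(D) \;=\; \frac{1}{\sqrt{2\pi}}\int_{\R} \widehat{f}(t)\, e^{itD}\, dt
\]
has propagation bounded by the diameter of $\mathrm{supp}\,\widehat{f}$, and bounded Riemannian geometry guarantees that it is uniformly smoothing; it therefore lies in the algebraic equivariant uniform Roe $\ast$-algebra. The key maximal-norm estimate
\[
\|f(D)\|_{\max} \;\le\; \frac{1}{\sqrt{2\pi}}\,\|\widehat{f}\|_{L^1}
\]
should follow because $e^{itD}$ extends to a unitary multiplier in every $\ast$-representation of the algebraic algebra, so the integrated operator can be bounded representation-by-representation. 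Extending by continuity then yields a bounded functional calculus on all $f$ with $\widehat{f} \in L^1(\R)$.

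Once this calculus is in place the Lichnerowicz argument concludes. The Weitzenb\"ock identity $D^2 = \nabla^*\nabla + \kappa/4$ together with $\kappa \ge \kappa_0 > 0$ forces $\mathrm{spec}(D) \subset (-\infty,-c] \cup [c,\infty)$ for $c = \sqrt{\kappa_0}/2$. Pick a smooth odd normalizing function $\chi$ with $\chi(x)^2 = 1$ for $|x| \ge c$ and such that $\chi - \chi_0$ has Fourier transform in $L^1(\R)$, where $\chi_0$ is a reference normalizing function defining the index class. Then $\chi(D)$ is an element of the multiplier algebra of $C^*_{u,\max}(M)^\Gamma$ representing the index of $D$, and the spectral condition forces $\chi(D)^2 = 1$. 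The $K$-theory class represented by a symmetry is zero by a standard argument (the associated Fredholm module is degenerate), so the lifted index vanishes in $K_n(C^*_{u,\max}(M)^\Gamma)$, and pushing forward gives $\ind_{\max}(D) = 0$ in $K_n(C^*_{\max}(M)^\Gamma)$.

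The main obstacle, and the principal analytic novelty required, is the construction of the maximal-norm functional calculus itself. One must show that the wave operators $e^{itD}$ extend to unitaries in every $\ast$-representation of the algebraic equivariant uniform Roe algebra, and that the integral defining $f(D)$ converges in the maximal completion rather than merely in the $L^2$-representation. Making this precise will require bounded geometry in an essential way, in order to obtain uniform local control of the Schwartz kernel of $e^{itD}$, and Assumption~\ref{ass:condition} should be precisely the hypothesis under which the maximal completion is well defined and absorbs these wave operators in a functorial manner.
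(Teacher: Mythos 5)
Your overall skeleton coincides with the paper's: pass to the maximal equivariant uniform Roe algebra, build a functional calculus for $D$ in the maximal setting, use the Lichnerowicz formula to get a spectral gap so that the normalizing function becomes a symmetry in the multiplier algebra, conclude vanishing of the uniform index, and push forward to $C^*_{\textnormal{max}}(M)^\Gamma$. The gap is that the step you yourself flag as ``the principal analytic novelty required'' is precisely the content of the paper, and your substitute for it is asserted, not proved. The claim that $e^{itD}$ ``extends to a unitary multiplier in every $*$-representation'' of the algebraic uniform Roe algebra, with the consequent bound $\|f(D)\|_{\textnormal{max}}\le C\|\widehat f\|_{L^1}$, cannot be run representation-by-representation without quantitative input: an abstract $*$-representation $\phi$ of the kernel algebra $\mathcal{S}_u^\Gamma$ carries no a priori continuity, so even after checking that $e^{itD}$ preserves $\mathcal{S}_u^\Gamma$ (bounded-geometry energy and Sobolev estimates) and defining an isometry on the essential subspace by $U_t\,\phi(k)\xi=\phi(e^{itD}k)\xi$, you still must commute $\phi$ past the integral $f(D)k=\frac{1}{2\pi}\int\widehat f(t)\,e^{itD}k\,dt$. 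That interchange needs the fact that sup-norm convergence of kernels with uniformly bounded propagation implies maximal-norm convergence, i.e.\ an estimate of the shape $\|\phi(T)\|\le C\,C_T\,V^4_{\mathrm{prop}(T)}$; this is Lemma \ref{lem:keyestimate}, whose proof occupies Section 2 and uses the discretization of $\mathcal{H}_M$, bounded geometry, and Assumption \ref{ass:condition} in an essential quantitative way --- not merely, as you suggest, to make the maximal completion well defined.

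Even granting that estimate, two further points are left open, and here your route genuinely diverges from the paper's. First, a normalizing function does not have $L^1$ Fourier transform (its transform has a $1/t$-type singularity), so $\chi(D)$ is not produced by your calculus; you would have to pair a compactly supported distribution against the merely strongly continuous family $e^{itD}$, or define the uniform index some other way. The paper avoids this entirely: it proves that $D\pm i\mu$ have dense range on the Hilbert module $C^*_{\textnormal{max},u}(M;L^2(S))^\Gamma$ over itself (Theorem \ref{thm:regularity}, via the exponential off-diagonal decay of the resolvent kernel in Proposition \ref{prop kernel} together with a cut-off/commutator argument controlled by Lemma \ref{lem:keyestimate}), so that $D$ is regular and self-adjoint (Corollary \ref{cor:selfadj}) and the Lance--Kucerovsky calculus (Theorem \ref{thm:functionalcalculus}) applies to all of $C(\mathbb{R})$, in particular to $x/|x|$ once the spectral gap is established at the Hilbert-module level. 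Second, the identification of the pushed-forward uniform index with $\ind_{\textnormal{max}}(D)$ must be made explicit; the paper does this by choosing a normalizing function with compactly supported Fourier transform, so that the same finite-propagation projection $P'_D$ represents both classes before applying $\iota_*$ and $\oplus\,0_*$. In short, your strategy is the right one, and the wave-operator calculus could plausibly be completed, but as written the two analytic pillars --- the quantitative maximal-norm estimate and a genuine functional calculus reaching the normalizing function --- are missing, and these are exactly where the paper does its work.
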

	We remark that this result is new even when $\Gamma$ is the trivial group.
	
	Let us give a brief overview of the paper. In section \ref{sec:Roe}, we recall some standard notions from higher index theory, as well as describe the two geometric conditions alluded to above. We show that, under these geometric conditions, it is possible to define various related versions of the maximal equivariant Roe algebra of $M$, in particular the \emph{maximal equivariant uniform Roe algebra}. In section \ref{sec:regularity}, we show that this algebra can be viewed as a Hilbert module on which the operator $D$ acts. A key point is that this operator is regular and essentially self-adjoint and hence admits a functional calculus. This allows us to define, in section \ref{sec:maxuniform}, the maximal equivariant \emph{uniform} index of $D$, which we show vanishes under the positive scalar curvature assumption. This, together with the fact that the equivariant uniform Roe algebra is a subalgebra of the maximal equivariant Roe algebra, implies Theorem \ref{thm:main}.
	\hfill\vskip 0.3in
	\section{Maximal equivariant Roe algebras}
	\label{sec:Roe}
	\subsection{Notation}
	Throughout this paper, $\Gamma$ is a countable discrete group acting properly and isometrically on a Riemannian manifold $M$ of positive dimension and which is $\Gamma$-equivariantly spin. Let $S\to M$ be the spinor bundle.
	
	We will write $B(M)$, $C_b(M)$, and $C_0(M)$ respectively for the $C^*$-algebras of complex-valued functions on $M$ that are: bounded Borel, bounded continuous, and continuous and vanishing at infinity. A superscript `$^\infty$' may be added to the latter two algebras to indicate the additional requirement that its elements be smooth. 
	
	We will use $d$ to denote the Riemannian distance on $M$, and $B_r(S)$ to denote the open ball in $M$ of radius $r$ around a subset $S\subseteq M$. For any two sets $A$ and $B$, we will write pr$_1$ and pr$_2$ for the projections of the cartesian product $A\times B$, or subsets thereof, onto $A$ and $B$ respectively. For any set $S$, let $\#S$ denote its cardinality.
	
	The $\Gamma$-action on $M$ naturally determines a $\Gamma$-action on spaces of functions over $M$: for a function $f$ and $g\in\Gamma$, let $g\cdot f$ be the function given by $g\cdot f(x)=f(g^{-1}x)$. More generally, for a section $s$ of a $\Gamma$-vector bundle over $M$, define $g\cdot s(x)=g(s(g^{-1}x))$. The action of $\Gamma$ on $l^2(\Gamma)$ will always be given by the left-regular representation.
	
	\subsection{Geometric conditions}
	\hfill\vskip 0.05in
	\label{subsec:condition}
	\noindent We now make precise the two geometric conditions alluded to above. The first is that the manifold $M$ has bounded Riemannian geometry. Under this assumption, it was established in \cite{GWY} that, when the acting group $\Gamma$ is trivial, the maximal Roe algebra (see Definition \ref{def:maximalnorm}) is well-defined. However, to prove the analogous result for general $\Gamma$, we will need a second hypothesis (even if the action is free). This is given by Assumption \ref{ass:condition}.
		\subsubsection{Bounded geometry}
	\label{subsubsec:bg}
	\begin{definition} A Riemanniann manifold $M$ is said to have \emph{bounded Riemannian geometry} if it has positive injectivity radius, and the curvature tensor and each of its covariant derivatives is uniformly bounded across $M$.	
	\end{definition}
	There is also a notion of bounded geometry for discrete metric spaces:
	\begin{definition}
	\label{def:discretebddgeom}
	A discrete metric space $(X,d)$ 	is said to have bounded geometry if for any $r>0$ there exists $N_r>0$ such that for any $x\in X$,
	$$\#B_r(x)=\{x'\in X\,|\,d(x,x')\leq r\}\leq N_r,$$
	\end{definition}

	Suppose $M$ has bounded Riemannian geometry, and let $d$ be the Riemannian distance. Then the metric space $(M,d)$ contains a countable discrete subspace $X_M$ with bounded geometry such that, for some constants $c>r_0>0$, we have:
	\begin{itemize}
	\item $B_c(X_M)=M$, or that $X_M$ is $c$\emph{-dense};
	\item for all $x,y\in X_M$,
	\begin{equation}
	\label{eq:r}
	B_{r_0}(x)\cap B_{r_0}(y)\neq\emptyset\implies x=y.
	\end{equation}
	\end{itemize}
	For any $r\geq 0$, let us define the following two quantities:
	\begin{equation}
	\label{eq VR UR}
	U_r\coloneqq\inf_{x\in M}\left\{\textnormal{vol }B_{r}(x)\right\},\qquad V_r\coloneqq\sup_{x\in M}\left\{\textnormal{vol }B_{r}(x)\right\}.
	\end{equation}
	Note that bounded Riemannian geometry implies that for each $r>0$,
	$$0<U_r\leq V_r<\infty.$$
	
	The fact that the Ricci curvature of $M$ is bounded from below means that it satisfies the following volume estimate \cite[Lemma 7.1.3]{Petersen}:
	\begin{lemma}
	\label{lem:gromovbishop}
	There exist constants $C_1,C_2>0$ such that for any $r\geq 0$,
	$$V_r\leq C_1 e^{C_2 r}.$$
	\end{lemma}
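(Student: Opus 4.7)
The plan is to reduce the inequality to the Bishop--Gromov volume comparison theorem, which is the content of \cite[Lemma 7.1.3]{Petersen} cited alongside the statement. The key observation is that bounded Riemannian geometry supplies a uniform lower bound on the Ricci curvature: since the full curvature tensor is assumed bounded, there exists some $K\geq 0$ such that $\mathrm{Ric}_M \geq -(n-1)K\,g$ everywhere on $M$. This is the only hypothesis the comparison theorem requires.

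Given this bound, Bishop--Gromov produces, for every $x\in M$ and every $r\geq 0$, the estimate
$$\vol B_r(x) \leq \tilde{V}_K(r),$$
where $\tilde{V}_K(r)$ denotes the volume of a geodesic ball of radius $r$ in the simply connected model space of constant sectional curvature $-K$. Explicitly,
$$\tilde{V}_K(r) = \omega_{n-1} \int_0^r \left(\frac{\sinh(\sqrt{K}\,s)}{\sqrt{K}}\right)^{n-1} ds,$$
with the $K=0$ case interpreted as $\omega_{n-1}\int_0^r s^{n-1}\,ds$. Applying the elementary bound $\sinh t \leq \tfrac{1}{2} e^{t}$ for $t\geq 0$ to the integrand, followed by integration, yields $\tilde{V}_K(r) \leq C_1 e^{C_2 r}$ with $C_2 = (n-1)\sqrt{K}$ and $C_1$ depending only on $n$ and $K$. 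Taking the supremum of $\vol B_r(x)$ over $x\in M$ then produces the claimed estimate for $V_r$.

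There is no substantive obstacle in the argument: the inequality is a classical consequence of a Ricci lower bound, and the only point to verify is that bounded Riemannian geometry delivers such a lower bound, which is immediate from the definition. If one wished to avoid citing Bishop--Gromov as a black box, an alternative would be to estimate the Jacobian of the exponential map directly from the bounds on the curvature tensor, but invoking the cited form of the comparison lemma handles this cleanly.
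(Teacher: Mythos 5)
Your proposal is correct and matches the paper's approach: the paper itself proves nothing beyond noting that bounded Riemannian geometry yields a Ricci lower bound and then citing the Bishop--Gromov comparison estimate from \cite[Lemma 7.1.3]{Petersen}, which is exactly your reduction. Your explicit computation with the model-space volume and the bound $\sinh t \leq \tfrac{1}{2}e^{t}$ simply fills in the details the paper leaves to the reference.
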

	It follows from this, and the definition of $X_M$, that:

	\begin{lemma}
	\label{lem:smallintobig}
	There exists a constant $C_0$ such that for each $x\in M$ and $R>0$,
	$$\#(B_R(x)\cap X_M)\leq C_0V_{R}/U_{r_0}.$$
	\end{lemma}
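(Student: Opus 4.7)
My plan is a standard volume-packing argument. The first ingredient is the observation that condition~\eqref{eq:r} forces $X_M$ to be $2r_0$-separated: if $y_1 \neq y_2$ in $X_M$ were at distance less than $2r_0$, then the triangle inequality would produce a point in $B_{r_0}(y_1) \cap B_{r_0}(y_2)$, contradicting \eqref{eq:r}. Consequently the family $\{B_{r_0}(y)\}_{y \in X_M}$ is pairwise disjoint in $M$.

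Now fix $x \in M$ and $R > 0$, and write $N \coloneqq \#(B_R(x) \cap X_M)$. For every $y \in B_R(x) \cap X_M$ the triangle inequality gives $B_{r_0}(y) \subseteq B_{R+r_0}(x)$, so summing volumes over this disjoint family yields
$$
N \cdot U_{r_0} \;\leq\; \sum_{y \in B_R(x) \cap X_M} \mathrm{vol}\, B_{r_0}(y) \;\leq\; \mathrm{vol}\, B_{R+r_0}(x) \;\leq\; V_{R+r_0},
$$
that is, $N \leq V_{R+r_0}/U_{r_0}$.

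To match the form stated in the lemma, I would next upgrade $V_{R+r_0}$ to a constant multiple of $V_R$. Under bounded Riemannian geometry this is a Bishop-Gromov-type volume comparison: at a base-point (nearly) realising the supremum $V_R$, the ratio $\mathrm{vol}\, B_{R+r_0}/\mathrm{vol}\, B_R$ is controlled by the analogous ratio in the model space dictated by the Ricci lower bound underlying Lemma~\ref{lem:gromovbishop}, and that ratio is bounded by a constant $C_0$ depending only on $r_0$ and the curvature bounds. Substituting gives the claimed $\#(B_R(x) \cap X_M) \leq C_0 V_R/U_{r_0}$.

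The main obstacle is precisely this last step---turning the packing estimate $N \leq V_{R+r_0}/U_{r_0}$ into one involving only $V_R$ uniformly in $R$. For $R$ bounded away from zero, Bishop-Gromov delivers this cleanly from the Ricci lower bound; the combinatorial packing itself is elementary, relying only on the $2r_0$-separation of $X_M$ and the positive uniform volume lower bound $U_{r_0} > 0$ provided by bounded geometry.
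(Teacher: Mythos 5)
Your packing argument---disjointness of the $r_0$-balls around points of $X_M$ given by \eqref{eq:r}, the lower bound $U_{r_0}>0$, and a volume-comparison step to pass from $V_{R+r_0}$ to $C_0V_R$---is exactly the argument the paper intends, since the lemma is stated there without proof as a consequence of Lemma \ref{lem:gromovbishop} and the definition of $X_M$. The small-$R$ obstacle you flag is a genuine imprecision in the lemma's statement rather than a flaw in your proof: if $B_R(x)\cap X_M\neq\emptyset$ and $R\to 0$ the left-hand side stays $\geq 1$ while $V_R\to 0$, so the inequality is only meaningful (and only used) for $R$ bounded below, where your Bishop--Gromov comparison closes the argument.
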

	\vskip 0.05in		
\subsubsection{A condition on the $\Gamma$-action}
	\hfill\vskip 0.05in
	\noindent 
To state the second geometric condition, we use the notion of a \emph{basic domain}. Observe that, since $\Gamma$ acts properly on $M$, each $x\in M$ is contained in a $\Gamma$-invariant neighborhood of the form $\Gamma\overline{V}\cong\Gamma\times_{\Gamma_x}\overline{V}$. Here $V$ is an open $\Gamma_x$-invariant neighborhood of $x$ with compact closure, $\Gamma_x$ is the stabilizer of $x$, and $\Gamma\times_{\Gamma_x}\overline{V}$ denotes the quotient of $\Gamma\times\overline{V}$ by the relation $(g,y)\sim(gh^{-1},hy)$, for $h\in\Gamma_x$, $g\in\Gamma$, and $y\in\overline{V}$; the isomorphism is given by sending $gy$ to the class $[g,y]$. From this it follows that $M$ can be written as a disjoint union $M\cong\sqcup_i\,\Gamma N_i$, where  each $N_i$ is a Borel subset of $M$ that is preserved by the action of an isotropy subgroup $F_i$, and for each $i$ we have a $\Gamma$-equivariant homeomorphism $\Gamma\overline{N}_i\cong\Gamma\times_{F_i}\overline{N}_i$. 
	\begin{definition}
	We call $N\coloneqq\cup_i N_i\subseteq M$ the \emph{basic domain} for the decomposition $M\cong\sqcup_i\,\Gamma N_i$ given above.
	\end{definition}
    
	For the rest of this paper, we will make the following standing assumption:
	\begin{assumption}
	\label{ass:condition}
	There exists a basic domain $N$ for the $\Gamma$-action on $M$ such that
	\begin{equation*}
	l(g)\rightarrow\infty\implies d(N, gN)\rightarrow\infty,
	\end{equation*}
	where $l$ is a fixed proper length function on $\Gamma$, and $d$ is the Riemannian distance.
	\end{assumption}

 	We remark that Assumption \ref{ass:condition} is satisfied when the $\Gamma$-action on $M$ is cocompact, or when $M$ is a cocompact manifold to which an infinite cylinder is attached. In particular, it is satisfied in the situation studied in \cite{CWY}. It is also satisfied by any action of a finite group on a manifold of bounded Riemannian geometry. 
 	
 	Suppose the $\Gamma$-action on $M$ satisfies Assumption \ref{ass:condition}. Then we have the following two easy consequences.
 	\begin{corollary}
 	\label{cor:finiteisotropy}
 	$M$ decomposes into finitely many orbit types.
	\end{corollary}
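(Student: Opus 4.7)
The plan is to show that only finitely many conjugacy classes of subgroups of $\Gamma$ appear as point-stabilizers of the $\Gamma$-action on $M$. Since the action is proper, every isotropy subgroup is automatically finite, so the task reduces to bounding the collection $\{F_i\}$ of isotropy subgroups arising in the decomposition $M \cong \sqcup_i\,\Gamma N_i$ and observing that every point-stabilizer is conjugate to a subgroup of some $F_i$.

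The first step is to extract from Assumption \ref{ass:condition} the quantitative consequence that for each $R \geq 0$ the set
\[
\Gamma_{N,R} \coloneqq \{g \in \Gamma \,:\, d(N, gN) \leq R\}
\]
is finite. If this were not the case, one could pick infinitely many distinct $g_k$ lying in $\Gamma_{N,R}$; by properness of $l$ one would then have $l(g_k) \to \infty$ along some subsequence, whereupon Assumption \ref{ass:condition} would force $d(N, g_k N) \to \infty$, contradicting $g_k \in \Gamma_{N,R}$. In particular $S \coloneqq \Gamma_{N,0}$ is a finite subset of $\Gamma$.

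Next I would bound the subgroups $F_i$ themselves. By construction $F_i$ preserves $N_i \subseteq N$, so for any $g \in F_i$ and any $y \in N_i$ the element $gy$ lies in $N \cap gN$, forcing $d(N, gN) = 0$ and hence $F_i \subseteq S$. Since $S$ is finite, only finitely many distinct subgroups $F_i$ can occur. The $\Gamma$-equivariant homeomorphism $\Gamma \overline{N}_i \cong \Gamma \times_{F_i} \overline{N}_i$ then shows that the stabilizer of any $y \in \overline{N}_i$ is already a subgroup of $F_i$: if $hy = y$, the defining equivalence relation $(g,y)\sim(gh^{-1},hy)$ together with the injectivity of $gy\mapsto[g,y]$ forces $[h,y]=[e,y]$, hence $h \in F_i$. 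Since every point of $M$ is a $\Gamma$-translate of some such $y$, every isotropy subgroup is conjugate to a subgroup of one of the finitely many $F_i$, yielding the finiteness of orbit types.

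The argument is essentially bookkeeping once Assumption \ref{ass:condition} is invoked, so I do not anticipate any serious obstacle. The only point requiring a little care is the last identification, where one must read off from the twisted-product description of $\Gamma\overline{N}_i$ that a stabilizer inside $\overline{N}_i$ cannot escape $F_i$; everything else is a direct consequence of the finiteness of $S$.
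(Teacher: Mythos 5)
Your proof is correct: the paper states this corollary without a written proof, as an immediate consequence of Assumption \ref{ass:condition}, and your argument (properness of $l$ gives finiteness of $\{g\in\Gamma : d(N,gN)\le R\}$, hence each $F_i$ lies in the finite set $\{g : d(N,gN)=0\}$, and the twisted-product description $\Gamma\overline{N}_i\cong\Gamma\times_{F_i}\overline{N}_i$ shows every stabilizer is conjugate into some $F_i$) is exactly the intended reasoning. Combined with the finiteness of each $F_i$ (so only finitely many subgroups occur in total), this gives finitely many conjugacy classes of stabilizers, i.e.\ finitely many orbit types, with no gaps.
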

 	\begin{corollary}
 	\label{cor:noaccumulation}
 	For each $x\in M$ and $R>0$, there exists $C_R>0$ such that
	$$\#(B_R(x)\cap G\cdot x)\leq C_R.$$
 	\end{corollary}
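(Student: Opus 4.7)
The plan is to combine Assumption \ref{ass:condition} with the properness of the length function $l$ to produce a uniform bound, after first reducing to the case where $x$ lies in the basic domain $N$.

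I would begin by observing that since $\Gamma$ acts by isometries and $\Gamma \cdot x$ is $\Gamma$-invariant, the quantity $\#(B_R(x) \cap \Gamma \cdot x)$ depends only on the orbit of $x$: if $x = hy$ with $h \in \Gamma$, then $B_R(x) = h B_R(y)$ and $\Gamma \cdot x = \Gamma \cdot y$, so the two intersections have the same cardinality. Since every $\Gamma$-orbit meets $N$ by the defining decomposition $M \cong \sqcup_i\,\Gamma N_i$, I may therefore assume without loss of generality that $x \in N$.

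Next, for each $g \in \Gamma$ with $gx \in B_R(x)$, the inequality $d(x, gx) < R$, together with $x \in N$ and $gx \in gN$, forces $d(N, gN) < R$. Assumption \ref{ass:condition} provides some $L_R > 0$ (depending only on $R$) such that $l(g) > L_R$ implies $d(N, gN) \geq R$, so contrapositively $l(g) \leq L_R$. Because $l$ is a proper length function on $\Gamma$, the set $F_R \coloneqq \{g \in \Gamma : l(g) \leq L_R\}$ is finite. Every point of $\Gamma \cdot x$ lying in $B_R(x)$ arises as $gx$ for some such $g$, so $\#(B_R(x) \cap \Gamma \cdot x) \leq \#F_R$, and setting $C_R \coloneqq \#F_R$ gives the required bound, which depends only on $R$ and the fixed data $(N, l)$.

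The argument is direct; the only point that requires care is the initial reduction to $x \in N$, without which the conclusion of Assumption \ref{ass:condition} would not translate immediately into a bound on $d(x, gx)$. Everything else is an unwinding of properness of $l$.
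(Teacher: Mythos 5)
Your proof is correct and follows essentially the same route as the paper: reduce to $x\in N$ using the fact that isometric translation by a group element preserves the cardinality $\#(B_R(x)\cap\Gamma\cdot x)$, then apply Assumption \ref{ass:condition}. You simply make explicit (via the contrapositive of the assumption and properness of the length function $l$) the step the paper leaves implicit, and in doing so you note that the bound is uniform in $x$, which is in fact what is used later in the paper.
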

	\begin{proof}
	Observe that Assumption \ref{ass:condition} implies that this relation holds for $x\in N$. The general statement follows by observing that for any $g\in\Gamma$, 
	\begin{align*}
	\#(B_R(x)\cap G\cdot x)&=\#(B_R(g\cdot x)\cap G\cdot x).\qedhere
	\end{align*}
	\end{proof}

	\subsection{Operator algebras}
 	\hfill\vskip 0.05in
	\noindent We now recall the definitions of geometric modules and Roe algebras, with the goal of proving that, under the previously stated geometric assumptions, the maximal equivariant Roe algebra is well-defined. We also provide an estimate of the maximal norm that will be important in section \ref{sec:regularity} (see Lemma \ref{lem:keyestimate}).

	\begin{definition}
	An $M$-$\Gamma$-module is a separable Hilbert space $\mathcal{H}$ equipped with a non-degenerate $*$-representation $\rho\colon C_0(M)\rightarrow\mathcal{B}(\mathcal{H})$ and a unitary representation $U\colon\Gamma\rightarrow\mathcal{U}(\mathcal{H})$ such that for all $f\in C_0(M)$ and $g\in\Gamma$, $U_g\rho(f)U_g^*=\rho(g\cdot f)$.
	\end{definition}
	For brevity, we will omit $\rho$ from the notation when it is clear from context.
	\begin{definition}
	\label{def:suppprop}
		Let $\mathcal{H}$ be an $M$-$\Gamma$-module and $T\in\mathcal{B}(\mathcal{H})$.
		\begin{itemize}
			\item The \emph{support} of $T$, denoted $\textnormal{supp}(T)$, is the complement in $M\times M$ of the set of $(x,y)$ for which there exist $f_1,f_2\in C_0(M)$ with $f_1(x)\neq 0$, $f_2(y)\neq 0$, and
			$$f_1Tf_2=0;$$
			\item The \textit{propagation} of $T$ is the extended real number $$\textnormal{prop}(T)=\sup\{d(x,y)\,|\,(x,y)\in\textnormal{supp}(T)\};$$
			\item $T$ is \textit{locally compact} if $fT$ and $Tf\in\mathcal{K}(H)$ for all $f\in C_0(M)$;
			\item $T$ is \textit{$\Gamma$-invariant} if $U_g TU_g^*=T$ for all $g\in\Gamma$.
		\end{itemize}
		Let $\mathbb{C}[M;\mathcal{H}]^\Gamma\subseteq\mathcal{B}(\mathcal{H})$ be the $*$-subalgebra of $\Gamma$-invariant, locally compact operators with finite propagation.
	\end{definition}

	We will work with certain maximal completions of $\mathbb{C}[M;\mathcal{H}]^{\Gamma}$. In order show that such completions are well-defined, we restrict ourselves to those modules $\mathcal{H}$ that satisfy an additional admissibility condition. To state this, we need the following fact: if $H$ is a Hilbert space and $\rho\colon C_0(M)\rightarrow\mathcal{B}(H)$ a non-degenerate $*$-representation, then $\rho$ extends uniquely to a $*$-representation $\tilde{\rho}\colon B(M)\rightarrow\mathcal{B}(H)$ subject to the property that, for a uniformly bounded sequence in $B(M)$ converging pointwise, the corresponding sequence in $\mathcal{B}(H)$ converges in the strong topology.

	\begin{definition}[\cite{Yu}]
		An $M$-$\Gamma$-module $\mathcal{H}$ is \emph{admissible} if:
		\begin{enumerate}[(i)]
		\item For any non-zero $f\in C_0(M)$, $\pi(f)\notin\mathcal{K}(\mathcal{H})$;
		\item For any finite subgroup $F$ of $\Gamma$ and any $F$-invariant Borel subset $E$ of $M$, there is a Hilbert space $\mathcal{H}'$ equipped with the trivial $F$-representation such that $\tilde{\pi}(\mathbbm{1}_E)\mathcal{H}'\cong l^2(F)\otimes\mathcal{H}'$ as $F$-representations, where $\tilde{\pi}$ is defined by extending $\pi$ as above.
		\end{enumerate}
	\end{definition}
	If an $M$-$\Gamma$-module $\mathcal{H}$ is admissible, we will write $\mathbb{C}[M]^\Gamma\coloneqq\mathbb{C}[M;\mathcal{H}]^\Gamma$, noting that $\mathbb{C}[M]^\Gamma$ is independent of the choice of admissible module.

	In this paper we will use two $M$-$\Gamma$-modules. The first is $L^2(S)$, equipped with the natural $\Gamma$-action and $C_0(M)$-representation. In general, $L^2(S)$ is not admissible (see also Remark \ref{rem:freeL2}). The second is the space
		$$\mathcal{H}_M\coloneqq L^2(S)\otimes l^2(\Gamma),$$
		equipped with the multiplicative action of $C_0(M)$ on the first factor and the diagonal $\Gamma$-action. Since we are assuming $M$ to have positive dimension, one verifies that $L^2(S)\otimes l^2(\Gamma)$ is an admissible $M$-$\Gamma$-module.
	
	We can view $L^2(S)$ as a submodule of $\mathcal{H}_M$ in the following way. Let $\chi\in C^\infty(M)$ be a cut-off function, meaning that $\textnormal{supp}(\chi)$ has compact intersection with every $\Gamma$-orbit and that for all $x\in M$, we have $\sum_{g\in\Gamma}\chi(gx)^2=1.$ Note that this sum is finite by properness of the action. Then the map 
	$$j\colon L^2(S)\rightarrow \mathcal{H}_M,\qquad j(s)(x,g)=\chi(g^{-1}x)s(x)$$
	is a $\Gamma$-equivariant isometric embedding. Let $p\colon\mathcal{H}_M\rightarrow j(L^2(S))$ be the orthogonal projection associated to $j$. 
    On operators, $j$ induces a map taking $T\mapsto jTj^{-1}p$, and we will denote this by
    \begin{equation}
    \label{eq oplus 0}
	\oplus\,0\colon\mathcal{B}(L^2(S))\rightarrow\mathcal{B}(\mathcal{H}_M).
    \end{equation}
    It is an injective $*$-homomorphism that preserves $\Gamma$-equivariance, local compactness, as well as finiteness of propagation, and hence restricts to an injective $*$-homomorphism 
    $\mathbb{C}[M,L^2(S)]^\Gamma\hookrightarrow\mathbb{C}[M]^\Gamma$.

	\begin{remark}
	\label{rem:freeL2}
		When the $\Gamma$-action $M$ is both proper and free, $L^2(S)$ is itself an admissible $M$-$\Gamma$-module, and the discussion above simplifies.
	\end{remark}
	\begin{definition}
	\label{def:maximalnorm}
	For an operator $T\in\mathbb{C}[M]^\Gamma$, its \emph{maximal norm} is
	$$||T||_{\textnormal{max}}\coloneqq\sup_{\phi,H'}\left\{\norm{\phi(T)}_{\mathcal{B}(H')}\,|\,\phi\colon\mathbb{C}[M]^\Gamma\rightarrow\mathcal{B}(H')\textnormal{ is a $*$-representation}\right\}.$$
	The \emph{maximal equivariant Roe algebra of $M$}, denoted $C^*_{\text{max}}(M)^\Gamma$, is the completion of $\mathbb{C}[M]^\Gamma$ with respect to $||\cdot||_{\textnormal{max}}$.
	\end{definition} 

 	\subsection{Estimating the maximal norm}
 	\label{subsec:estimating}
 	\hfill\vskip 0.05in
 	\noindent To make sense of Definition \ref{def:maximalnorm}, one needs to show that for any $T\in\mathbb{C}[M]^\Gamma$, there exists a constant $C$ bounding the norm of $T$ in any $*$-representation. We now show that this is the case under the geometric conditions in subsection \ref{subsec:condition}, namely:
	\begin{proposition}
	\label{prop:finiteness}
	Suppose that $M$ has bounded geometry and that Assumption \ref{ass:condition} holds. Then for any $T\in\mathbb{C}[M]^\Gamma$ and any $*$-homomorphism $\phi\colon\mathbb{C}[M]^\Gamma\rightarrow\mathcal{B}(H')$, for $H'$ a Hilbert space, we have
		$$\norm{\phi(T)}_{\mathcal{B}(H')}<\infty.$$	
	\end{proposition}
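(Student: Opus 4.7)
The plan is to exhibit $T$ as a finite sum $T=\sum_{\ell=1}^L T_\ell$ of elements of $\mathbb{C}[M]^\Gamma$ each of which is algebraically ``block-diagonal'' in a sufficiently strong sense that any $*$-representation $\phi$ is forced to be uniformly bounded on $T_\ell$. Concretely, using the $c$-dense discrete subspace $X_M$ from Section \ref{subsubsec:bg}, I would arrange $X_M$ to be $\Gamma$-invariant (so $X_M = \Gamma \cdot (X_M \cap N)$) and Borel-partition $M$ into cells $\{E_x\}_{x \in X_M}$ with $E_x \subseteq B_c(x)$ and $g E_x = E_{gx}$; the existence of such an equivariant partition uses the basic-domain structure of Assumption \ref{ass:condition}. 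Writing $T_{x,y} := \chi_{E_x} T \chi_{E_y}$, finite propagation with $R := \textnormal{prop}(T)$ forces $T_{x,y} = 0$ whenever $d(x,y) > R + 2c$, and Lemma \ref{lem:smallintobig} bounds the number of nonzero $y$ per fixed $x$ by some $K$ depending only on $R$ and the bounded-geometry constants.

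Next I would form the ``conflict graph'' on the set of pairs $(x,y) \in X_M \times X_M$ with $d(x,y) \leq R + 2c$, declaring two pairs adjacent when they share a coordinate; this graph has degree at most $2(K-1)$, and since it is $\Gamma$-invariant, it admits a $\Gamma$-equivariant proper coloring by $L := 2K-1$ colors (obtained by properly coloring the quotient graph, which enjoys the same degree bound). Setting $T_\ell := \sum_{(x,y) \in S_\ell} T_{x,y} \in \mathbb{C}[M]^\Gamma$ for the $\Gamma$-invariant color classes $S_\ell$, one has $T = \sum_{\ell=1}^L T_\ell$. By construction, any two distinct pairs in a fixed $S_\ell$ have both coordinates distinct, giving the $C^*$-algebraic orthogonality $T_{x,y}^* T_{x',y'} = T_{x,y} T_{x',y'}^* = 0$; hence
\[
T_\ell^* T_\ell \;=\; \sum_{(x,y) \in S_\ell} T_{x,y}^* T_{x,y}
\]
is a mutually orthogonal sum of positive operators on $\mathcal{H}_M$.

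To turn this algebraic identity into a uniform norm bound under $\phi$, I would regroup the summands by the $\Gamma$-orbit of $y$: the coloring forces $y \mapsto x(y)$ to be a well-defined $\Gamma$-equivariant function on $\textnormal{pr}_2(S_\ell)$, and each orbit sum $A_{[y_0]} := \sum_{y \in \Gamma \cdot y_0} T_{x(y), y}^* T_{x(y), y}$ lies in $\mathbb{C}[M]^\Gamma$, is supported on a single $\Gamma$-orbit, and is locally compact. By properness the stabilizer $F_{y_0}$ is finite, so $A_{[y_0]}$ is identified with an element of a local crossed product of the form $\mathcal{K}(H_{y_0}) \rtimes F_{y_0}$ for a bounded-geometry-controlled local Hilbert space $H_{y_0}$; being a crossed product by a finite group, this $C^*$-algebra has a unique $C^*$-norm, which coincides with the operator norm on $\mathcal{H}_M$. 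Mutual orthogonality across orbits then yields
\[
\|\phi(T_\ell^* T_\ell)\| \;=\; \sup_{[y_0]} \|\phi(A_{[y_0]})\| \;\leq\; \sup_{x,y} \|T_{x,y}\|^2 \;\leq\; \|T\|_{\mathcal{B}(\mathcal{H}_M)}^2,
\]
so that $\|\phi(T)\| \leq L\, \|T\|_{\mathcal{B}(\mathcal{H}_M)} < \infty$.

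The main obstacle will be the identification in the last step: matching $A_{[y_0]}$ precisely with an element of a local crossed product whose $C^*$-norm agrees with the operator norm. Both Assumption \ref{ass:condition} (via Corollary \ref{cor:finiteisotropy}, which bounds the number of orbit types, and Corollary \ref{cor:noaccumulation}, which controls local multiplicities) and the bounded geometry of $M$ (which keeps the local blocks genuinely compact with uniformly bounded rank) are essential here; the $\Gamma$-equivariant coloring step in the second paragraph also depends on the same basic-domain structure.
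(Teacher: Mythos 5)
Your overall strategy is the same as the paper's: use finite propagation, bounded geometry and Assumption \ref{ass:condition} to cut $T$ into boundedly many ``partial matching'' pieces $T_\ell$, and then bound $\phi(T_\ell)$ by exploiting that $T_\ell^*T_\ell$ is block-diagonal, so that a $*$-homomorphism is forced to be contractive on it (this is exactly the mechanism behind Lemma \ref{lem:keyestimate}, where the coloring is imported from \cite[Lemma 3.4]{GWY}). However, your final step has a genuine gap. The decomposition $T_\ell^*T_\ell=\sum_{[y_0]}A_{[y_0]}$ runs over \emph{infinitely} many orbits (the action is not cocompact), and it converges only strongly in $\mathcal{B}(\mathcal{H}_M)$; it is not an algebraic identity in $\mathbb{C}[M]^\Gamma$. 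Since $\phi$ is an arbitrary $*$-homomorphism with no continuity hypothesis --- indeed, boundedness of $\phi$ is precisely what the Proposition asserts --- you cannot push this infinite orthogonal decomposition through $\phi$, so the claimed identity $\norm{\phi(T_\ell^*T_\ell)}=\sup_{[y_0]}\norm{\phi(A_{[y_0]})}$ is unjustified and in fact circular. The crossed-product/unique-$C^*$-norm argument only controls $\phi$ on a single orbit block and does nothing to repair the passage through the infinite sum. The repair is the paper's key observation: the set of \emph{all} $\Gamma$-invariant, locally compact operators supported on the diagonal blocks (in the paper's notation $l^\infty(\tilde{Z},\mathcal{K}(H))^\Gamma\subseteq\mathbb{C}[M]^\Gamma$) is norm-closed in $\mathcal{B}(\mathcal{H}_M)$, hence an honest $C^*$-algebra, so $\phi$ restricted to it is automatically contractive; one then computes $\norm{T_\ell^*T_\ell}_{\mathcal{B}(\mathcal{H}_M)}\leq\sup_{x,y}\norm{T_{x,y}}^2$ \emph{before} applying $\phi$, with no orbitwise decomposition of $\phi(T_\ell^*T_\ell)$ needed. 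With this substitution your argument essentially becomes the paper's.

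A secondary point to fix: in the presence of nontrivial (finite) stabilizers, a $\Gamma$-\emph{equivariant} coloring cannot do what you ask of it. If $g\in\Gamma_x\setminus\{e\}$, the pairs $(x,y)$ and $(x,gy)$ are distinct, share a coordinate, and lie in the same $\Gamma$-orbit, so any equivariant coloring gives them the same color; hence within a color class the ``both coordinates distinct'' property, and with it the orthogonality $T_{x,y}^*T_{x',y'}=0$, can fail, and $T_\ell^*T_\ell$ acquires off-diagonal (same-orbit) blocks, which also undermines the block-diagonal contractivity step. The paper sidesteps this by discretizing on the enlarged index set $\tilde{Z}=\bigsqcup_{y}(O_y\times F_y)$ (Proposition \ref{prop:Hilbertiso}), choosing a fundamental domain $\mathcal{F}\subseteq\tilde{Z}$, and performing the coloring only on the columns indexed by $\mathcal{F}$, which is where Corollaries \ref{cor:finiteisotropy} and \ref{cor:noaccumulation} enter. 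You would need an analogous device (or to restrict attention to free actions, as in Remark \ref{rem:freeL2}) for your coloring step to deliver the orthogonality you use.
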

	In order to perform the estimates required to prove this, we will work with the module $\mathcal{H}_M$ in a discretized form that we now describe.
	\subsubsection{Discretizing $\mathcal{H}_M$}
	\label{subsubsec:discretizing}
	\hfill\vskip 0.05in	
	\noindent 
	
	Let us consider the case when $S$ is the trivial line bundle over $M$, with the general case being analogous. Let $X_M$, $c$, and $r_0$ be as in \ref{subsubsec:bg}. The fact that $X_M$ is $c$-dense, together with \eqref{eq:r}, implies that there exists a Borel cover $\mathcal{U}$ of $M$ such that, for each $U\in\mathcal{U}$, there exists $x\in X_M$ with $B_{r_0}(x)\subseteq U\subseteq B_c(x)$. 
	
	Let $\pi\colon M\rightarrow M/\Gamma$ denote the projection onto orbits. Using the cover $\mathcal{U}$, we can construct a subset $X_0$ of $X_M$ with the following properties:
	\begin{itemize}
	\item $B_c(\Gamma\cdot X_0)=M$; 
	\item There exists a constant $C>0$ such that for any $x\in X_0$ and $R>0$,
		$$\#\pi(B_R(\Gamma\cdot x)\cap(\Gamma\cdot X_0))\leq CV_{R}/U_{r_0},$$
	\end{itemize}
	where $V_{R}$ and $U_{r_0}$ are as in \eqref{eq VR UR}.
Define
	\begin{equation}
	\label{eq Z}	
	Z\coloneqq\Gamma\cdot X_0.
	\end{equation}
	
We can use the set $Z$ to rewrite the module $\mathcal{H}_M\cong L^2(M\times\Gamma)$ as follows. Since the (diagonal) action of $\Gamma$ on $M\times\Gamma$ is proper and free, it admits a fundamental domain $D_0\subseteq M\times\Gamma$. We may choose $D_0$ so that pr$_1(D_0)\subseteq N$, where $N$ is the basic domain in Assumption \ref{ass:condition}. Then the set $D_1\coloneqq D_0\cap(Z\times\Gamma)$ is a fundamental domain for the $\Gamma$-subspace $Z\times\Gamma\subseteq M\times\Gamma$. We may choose $D_0$ in such a way that it contains $D_1$ as a $c$-dense subset. This defines for us a a unitary isomorphism 
$$L^2(D_0)\cong l^2(D_1)\otimes H$$ 
for a separable Hilbert space $H$. In turn we have $\Gamma$-equivariant unitary isomorphisms
        \begin{align}
        \label{eq:discretization}
\mathcal{H}_M&\cong L^2(M\times\Gamma)\nonumber\\
&\cong L^2(D_0)\otimes l^2(\Gamma)\nonumber\\
&\cong l^2(D_1)\otimes l^2(\Gamma)\otimes H\nonumber\\
&\cong l^2(Z\times\Gamma)\otimes H,
        \end{align}
        where $H$ is equipped with the trivial $\Gamma$-representation.
Now given a point $y\in Z$, let $O_y\subseteq Z$ and $F_y<\Gamma$ denote its orbit and stabilizer respectively. For each such $y$, identify (set-theoretically) $O_y$ with $\Gamma/F_y$ to obtain a bijective map
	$$\varphi_y\colon O_y\times F_y\xrightarrow{\cong}\Gamma/F_y\times F_y.$$
	Choose a section $\phi_y\colon\Gamma/F_y\rightarrow\Gamma$. Then we have a bijection 
	\begin{align*}
	\tilde{\phi}_y\colon\Gamma &\rightarrow\Gamma/F_y\times F_y,\\
	g&\mapsto(gF_y,g^{-1}\phi_y(gF_y)).
	\end{align*}
	Let $\Gamma$ act on $\Gamma/F_y\times F_y$ by the pushforward of the $\Gamma$-action on itself along $\tilde{\phi}_y$. Now consider the collection of orbits $W\coloneqq\pi(Z)\subseteq M/\Gamma$. For each $O\in W$, choose a representative in the basic domain $N$, and let $Y$ be the collection of representatives so obtained as $O$ ranges over $W$. Define the sets
	\begin{equation}
	\label{eq tilde Z}
	\tilde{Z}\coloneqq\bigsqcup_{y\in Y}(O_y\times F_y),\qquad E\coloneqq\bigsqcup_{y\in Y}(\Gamma/F_y\times F_y),
	\end{equation}
	and equip them with piecewise $\Gamma$-actions.
	Upon taking a disjoint union of the maps $\varphi_y$, we obtain a $\Gamma$-equivariant bijection
	$$\varphi\colon\tilde{Z}\xrightarrow{\cong}E.$$
	This in turn gives equivalent $\Gamma$-representations on the Hilbert spaces
	$$\bigoplus_{y\in Y}l^2(O_y\times F_y)\cong\bigoplus_{y\in Y}l^2(\Gamma/F_y\times F_y).$$
	We have the following:
\begin{proposition}
\label{prop:Hilbertiso}
There is a $\Gamma$-equivariant unitary isomorphism
	$$\mathcal{H}_M\cong l^2(\tilde{Z})\otimes H,$$
	where $H$ is a Hilbert space equipped with the trivial $\Gamma$-representation.
\end{proposition}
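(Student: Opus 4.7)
The plan is to combine the discretization \eqref{eq:discretization} with the $\Gamma$-equivariant bijection $\varphi$ and a Fell absorption argument. Starting from $\mathcal{H}_M \cong l^2(Z \times \Gamma) \otimes H$, I would first break $Z$ along its $\Gamma$-orbits, $Z = \bigsqcup_{y \in Y} O_y$, to obtain
\begin{equation*}
\mathcal{H}_M \;\cong\; \bigoplus_{y \in Y} \bigl( l^2(O_y) \otimes l^2(\Gamma) \bigr) \otimes H,
\end{equation*}
where $\Gamma$ acts diagonally on each summand via $g \cdot (\delta_o \otimes \delta_h) = \delta_{g \cdot o} \otimes \delta_{gh}$, and trivially on $H$.

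Next I would apply the classical Fell absorption trick: the unitary $W_y(\delta_o \otimes \delta_h) = \delta_{h^{-1} o} \otimes \delta_h$ on $l^2(O_y) \otimes l^2(\Gamma)$ intertwines the diagonal representation with the one that is trivial on $l^2(O_y)$ and left regular on $l^2(\Gamma)$. Composing with the $\Gamma$-equivariant bijection $\varphi_y^{-1} \circ \tilde{\phi}_y\colon \Gamma \to O_y \times F_y$ supplied by the setup, I would identify $l^2(\Gamma)$ with $l^2(O_y \times F_y)$ as $\Gamma$-representations to conclude
\begin{equation*}
l^2(O_y) \otimes l^2(\Gamma) \;\cong\; l^2(O_y \times F_y) \otimes l^2(O_y)_{\mathrm{triv}},
\end{equation*}
where the subscript indicates the trivial $\Gamma$-action. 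Reassembling the direct sum and tensoring with $H$ then produces
\begin{equation*}
\mathcal{H}_M \;\cong\; \bigoplus_{y \in Y} l^2(O_y \times F_y) \otimes \bigl(l^2(O_y) \otimes H\bigr),
\end{equation*}
with trivial $\Gamma$-action on the parenthesized factors.

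The final step is to collapse each $l^2(O_y) \otimes H$ into a single copy of $H$. Because $H$ is separable and infinite-dimensional---this follows from the fact that the cells of $D_0$ around the $c$-dense subset $D_1$ have positive Lebesgue measure---each $l^2(O_y) \otimes H$ is unitarily isomorphic to $H$ as a (trivial) $\Gamma$-representation, and choosing any such isomorphisms for each $y$ assembles the direct sum into $l^2(\tilde{Z}) \otimes H$. I expect the main thing requiring care to be the bookkeeping of $\Gamma$-equivariance through Fell absorption and through the various transport maps: one must verify that the piecewise $\Gamma$-actions on $\tilde{Z}$ and $E$ defined via $\tilde{\phi}_y$ and $\varphi_y$ really do make $\varphi_y^{-1} \circ \tilde{\phi}_y$ into a $\Gamma$-equivariant bijection from $\Gamma$ (with the left regular action) to $O_y \times F_y$. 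Once that is confirmed, the absorption of the $l^2(O_y)$ factors into $H$ is a pure dimension-counting step.
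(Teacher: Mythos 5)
Your proposal is correct and follows essentially the same route as the paper: discretize via \eqref{eq:discretization}, split along the orbits $O_y$, convert the diagonal action on $l^2(O_y\times\Gamma)$ into the transported left-regular action on $l^2(O_y\times F_y)$ tensored with a trivially-acted factor, and absorb that trivial factor into the infinite-dimensional separable $H$. The only difference is in packaging: you realize the key intertwiner as the classical Fell absorption unitary composed with the equivariant re-indexing $\varphi_y^{-1}\circ\tilde{\phi}_y$ (leaving $l^2(O_y)$ as the trivial leftover), whereas the paper encodes the same step in the explicit bijection $\vartheta_y$ through $\Gamma\times_{F_y}\Gamma$ built from the sections $\nu_y,\phi_y$ (leaving $l^2(F_y\backslash\Gamma)$), and both leftovers are countable so the final absorption into $H$ works identically.
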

\begin{proof}
By \eqref{eq:discretization}, we have 
\begin{equation}
\label{eq:intermediate}
\mathcal{H}_M\cong\bigoplus_{y\in Y}l^2(O_y\times\Gamma)\otimes H\cong\bigoplus_{y\in Y}l^2(\Gamma/F_y\times\Gamma)\otimes H.
\end{equation}

For each $y\in Y$, let $\nu_y\colon F_y\backslash\Gamma\rightarrow\Gamma$ be a section. Let $\phi_y$ and $\tilde{\phi}_y$ be given as above. One verifies that the following map is a $\Gamma$-equivariant bijection:
	$$\vartheta_y\colon\Gamma/F_y\times\Gamma\to\Gamma/F_y\times F_y\times F_y\backslash\Gamma,$$
	$$(g_1F_y,g_2)\mapsto(g_1 F_y,\nu_y(F_y g_1^{-1}g_2)g_2^{-1}\phi_y(g_2\nu_y(F_y g_1^{-1}g_2)^{-1}F_y),F_y g_1^{-1}g_2),$$
	where $\Gamma$ acts on the left diagonally, while on the right it acts on $\Gamma/F_y\times F_y$ by pushing forward the left-action of $\Gamma$ on itself along $\tilde{\phi}_y$, and trivially on $F_y\backslash\Gamma$. Indeed, $\vartheta_y$ can be written as a composition of maps as follows. Define the bijection		
	$$\tilde{\nu}_y\colon\Gamma\to F_y\times F_y\backslash\Gamma,\qquad g\mapsto(g\nu_y(F_y g)^{-1},F_y g).$$
	Then $\vartheta_y$ is the following composition of bijections:
    $$\Gamma/F_y\times\Gamma\rightarrow\Gamma\times_{F_y}\Gamma\rightarrow(\Gamma\times_{F_y}F_y)\times F_y\backslash\Gamma\rightarrow\Gamma\times F_y\backslash\Gamma\rightarrow\Gamma/F_y\times F_y\times F_y\backslash\Gamma,$$
    \begin{align*}
    (g_1 F_y,g_2)&\mapsto[(g_1,g_1^{-1}g_2)]\\
    &\mapsto([g_1,\textnormal{pr}_1(\tilde{\nu}_y(g_1^{-1}g_2))],\textnormal{pr}_2(\tilde{\phi}_y(g_1^{-1}g_2)))\\
    &\mapsto(g_2\nu_y(F_y g_1^{-1}g_2)^{-1},F_y g_1^{-1}g_2)\\
    &\mapsto(\tilde{\phi}_y(g_2\nu_y(F_y g_1^{-1}g_2)^{-1}),F_y g_1^{-1}g_2),
    \end{align*}
	where $\Gamma\times_{F_y}\Gamma$ and $\Gamma\times_{F_y}F_y$ are, respectively, the quotients of $\Gamma\times\Gamma$ and $\Gamma\times F_y$ by the equivalence relation $(g_1,g_2)\sim(g_1 k^{-1},kg_2),$ for $g_1\in\Gamma$, $k\in F_y$, and $g_2$ in either $\Gamma$ or $F_y$. For each $y\in Y$, $\vartheta_y$ induces a $\Gamma$-equivariant isomorphism
    $$l^2(\Gamma/F_y\times\Gamma)\cong l^2(\Gamma/F_y\times F_y)\otimes l^2(F_y\backslash\Gamma),$$
    where $\Gamma$ acts trivially on $l^2(F_y\backslash\Gamma)$. Combining this with \eqref{eq:intermediate} gives
	\begin{align*}
	\mathcal{H}_M&\cong\bigoplus_{y\in Y}l^2(\Gamma/F_y\times F_y)\otimes l^2(F_y\backslash\Gamma)\otimes H\\
	&\cong\bigoplus_{y\in Y}l^2(O_y\times F_y)\otimes l^2(F_y\backslash\Gamma)\otimes H.
	\end{align*}
Now, for each $y$, pick an identification of $l^2(F_y\backslash\Gamma)\otimes H$ with $H$ to give
	\begin{align*}
	\mathcal{H}_M&\cong\bigoplus_{y\in Y}l^2(O_y\times F_y)\otimes H\cong l^2(\tilde{Z})\otimes H.\qedhere
	\end{align*}
\end{proof}
The isomorphism $\mathcal{H}_M\cong l^2(\tilde{Z})\otimes H$ constructed in the above proof gives a $\Gamma$-equivariant identification between operators in $\mathcal{B}(\mathcal{H}_M)$ and $\tilde{Z}\times\tilde{Z}$-matrices with entries in $\mathcal{B}(H)$. Furthermore, it imposes a strong relationship between the propagation of an operator in $\mathcal{B}(\mathcal{H}_M)$ and the off-diagonal support of the corresponding matrix. To make this precise, we introduce the following notion.
\begin{definition}
The \emph{matricial support} of $T\in\mathcal{B}(\mathcal{H}_M)$ is the set
	$$\textnormal{matsupp}(T)=\{(w,z)\in\tilde{Z}\times\tilde{Z}\,|\,T_{wz}\neq 0\}.$$
\end{definition}
%
Define the composition
$$\textnormal{pr}\colon\tilde{Z}\xrightarrow{\textnormal{pr}_1}\bigsqcup_{y\in Y}O_y\hookrightarrow M.$$
For subsets $S,S'$ of $\tilde{Z}$, let us write
\begin{align*}
d^{\tilde{Z}}(S,S')&\coloneqq d(\textnormal{pr}(S),\textnormal{pr}(S')).
\end{align*}
(If $S$ or $S'=\{z\}$, we will write $z$ in place of $S$ or $S'$.) Using the fact that $Z$ is $c$-dense in $M$, we see that for any $w,z\in\textnormal{matsupp}(T)$,
	$$d^{\tilde{Z}}(w,z)\leq\textnormal{prop}(T)+c.$$

Finally, observe that the subset
	$$\mathcal{F}\coloneqq\left\{(x,e)\in\tilde{Z}\,|\,x\in Y\right\}$$ 
	is a fundamental domain for the $\Gamma$-action on $\tilde{Z}$, where $e$ is the identity in $\Gamma$. Thus if $T\in\mathcal{B}(\mathcal{H}_M)$ is a $\Gamma$-invariant operator, it is determined entirely by its entries in $\tilde{Z}\times\mathcal{F}$. If, in addition, $T$ has finite propagation, then one only needs to know the entries in the subset $B^{\tilde{Z}}_{\textnormal{prop}(T)+c}(\mathcal{F})\times\mathcal{F},$
		where 
		$$B^{\tilde{Z}}_R(S)\coloneqq\{z\in\tilde{Z}\,|\,d^{\tilde{Z}}(z,S)<R\},$$
		for a subset $S\subseteq\tilde{Z}$ and $R>0$.


\subsubsection{Norm estimation} 
	\hfill\vskip 0.05in	
\noindent We now proceed with the proof of Proposition \ref{prop:finiteness}. The first observation is:
\label{subsubsec:a}
	\begin{lemma}
	\label{lem:interactions}
	There exists a constant $C$ such that for any $z\in\mathcal{F}$ and $R>0$, 
	$$\#B_R^{\tilde{Z}}(z)\leq CV^2_{R}.$$
	\end{lemma}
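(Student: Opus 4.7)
The plan is to estimate $\#B_R^{\tilde{Z}}(z)$ by decomposing the count according to the fiber structure of $\tilde{Z}$ over $Z$, and then applying the bounded-geometry volume estimates. Fix $z = (y, e) \in \mathcal{F}$ with $y \in Y$. Since $d^{\tilde{Z}}(w, z) = d(\textnormal{pr}(w), y)$, a point $w = (x, f) \in O_{y'} \times F_{y'} \subseteq \tilde{Z}$ belongs to $B_R^{\tilde{Z}}(z)$ precisely when $d(x, y) < R$. Grouping by the underlying orbit therefore gives
$$\#B_R^{\tilde{Z}}(z) \;=\; \sum_{y' \in Y,\; O_{y'} \cap B_R(y) \neq \emptyset} \#(O_{y'} \cap B_R(y)) \cdot \#F_{y'}.$$

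I would then bound the three factors in turn. The isotropy term is uniformly controlled: by Corollary \ref{cor:finiteisotropy} only finitely many orbit types appear in $M$, and each $F_{y'}$ is finite by properness of the action, so there is a constant $K$ with $\#F_{y'} \le K$ for every $y' \in Y$. For the number of orbits contributing to the sum, I would use that $Z \subseteq X_M$ together with Lemma \ref{lem:smallintobig} to obtain $\#(Z \cap B_R(y)) \le C_0 V_R / U_{r_0}$; since distinct contributing orbits are witnessed by distinct points in $Z \cap B_R(y)$, the number of such orbits is bounded by the same quantity. For the number of points contributed by a single orbit, $O_{y'} \subseteq Z \subseteq X_M$ gives the same estimate $\#(O_{y'} \cap B_R(y)) \le C_0 V_R / U_{r_0}$.

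Multiplying these three bounds produces $\#B_R^{\tilde{Z}}(z) \le C V_R^2$ with $C = K C_0^2 / U_{r_0}^2$, which is the claim. The only substantive input beyond bookkeeping is the uniform bound on isotropy orders, and that is precisely what Corollary \ref{cor:finiteisotropy} supplies; the rest of the argument is a direct combination of the volume estimates recalled in subsection \ref{subsubsec:bg}. I do not anticipate any genuine analytic obstacle, only the need to be careful that the orbit-counting step and the per-orbit counting step are bounded by the same quantity $V_R$, so that their product correctly yields the $V_R^2$ in the statement.
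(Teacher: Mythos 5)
Your overall decomposition (isotropy order $\times$ number of contributing orbits $\times$ points per orbit within $B_R(y)$) matches the paper's strategy, and the uniform bound on $\#F_{y'}$ via Corollary \ref{cor:finiteisotropy} is exactly what the paper uses. But the two counting steps rest on the claim $O_{y'}\subseteq Z\subseteq X_M$, and this containment is unjustified and false in general: $Z=\Gamma\cdot X_0$ is the $\Gamma$-saturation of $X_0$, and while $X_0\subseteq X_M$, the set $X_M$ is not $\Gamma$-invariant, so translates $g\cdot x$ of points of $X_0$ need not lie in $X_M$. More to the point, Lemma \ref{lem:smallintobig} is a packing estimate that depends on the $r_0$-separation property \eqref{eq:r} of $X_M$; the set $Z$ (and in particular a single orbit $O_{y'}$) inherits no such separation, since points of one orbit can sit close to points of another, and a priori nothing prevents many orbit points from clustering inside a ball. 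A useful sanity check: if your containment held, your argument would give $\#B_R^{\tilde Z}(z)\leq C V_R$ outright (bound $\#(Z\cap B_R(y))$ once by Lemma \ref{lem:smallintobig} and multiply by the isotropy constant), which is strictly stronger than the stated $CV_R^2$ -- the quadratic bound is there precisely because $Z$ is not a separated net.

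The missing ingredient is Assumption \ref{ass:condition}, which your proof never uses beyond finiteness of orbit types. The paper controls the per-orbit count via Corollary \ref{cor:noaccumulation}: each element of the Borel cover $\mathcal{U}$ (pieces of diameter at most $2c$) meets any single $\Gamma$-orbit in at most $C_{2c}$ points, and by Lemma \ref{lem:smallintobig} at most $C_1V_R/U_{r_0}$ cover elements meet $B_R(x)$, giving $\#(B_R(x)\cap O)\leq V_{R+c}C_{2c}/U_{r_0}$. Likewise, the bound on the number of orbits of $Z$ meeting $B_R(y)$ is not obtained by intersecting $Z$ with $X_M$ but is built into the construction of $X_0$ (the property $\#\pi(B_R(\Gamma\cdot x)\cap \Gamma\cdot X_0)\leq CV_R/U_{r_0}$ from subsection \ref{subsubsec:discretizing}). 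To repair your proof, replace both appeals to ``$Z\subseteq X_M$ plus Lemma \ref{lem:smallintobig}'' by these two inputs; the rest of your bookkeeping then goes through and produces the $V_R^2$ bound.
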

	\begin{proof} 
	By Corollary \ref{cor:finiteisotropy}, the cardinality of the stabilizer $F_x$ of any point $x\in M$ is uniformly bounded. Thus it suffices to show that there exists $C$ such that	$$\#\textnormal{pr}(B_R^{\tilde{Z}}(z))\leq CV^2_{R}$$
	for any $z\in\mathcal{F}$ and $R>0$. In other words, it suffices to show that for any $x\in N$, $\#(B_R(x)\cap Z)\leq CV_R^2$, where $Z$ is as in \eqref{eq Z}. To this end, let $\mathcal{U}$ be the Borel cover from \ref{subsubsec:discretizing}. Lemma \ref{lem:smallintobig} implies that there exists $C_1$ such that
	$$\#\{U\in\mathcal{U}\colon U\cap B_{R}(x)\neq\emptyset\}\leq C_1V_{R}/U_{r_0}.$$ 
	Corollary \ref{cor:noaccumulation} implies that each element of $\mathcal{U}$ contains at most $C_{2c}$ points from any single $\Gamma$-orbit $O\subseteq M$ (here $C_{2c}$ is the constant $C_R$ from Corollary \ref{cor:noaccumulation} with $R=2c$). Thus
		$$\#(B_R(x)\cap O)\leq V_{R+c}C_{2c}/U_{r_0}.$$
	By construction, the number of orbits in the set $Z$ that intersect $B_{R}(x)$ is bounded above by $C_2V_{R}/U_{r_0}$ for some constant $C_2$. It follows from Lemma \ref{lem:gromovbishop} that 
	$$\#(B_R(x)\cap Z)\leq C V_{R}^2,$$ 
	where $C$ is independent of $R$.
	\end{proof}
	\begin{lemma}
	\label{lem:interactionsfundamentaldomain}
	There exists a constant $C$ such that for any $z\in\mathcal{F}$ and $R>0$, 
	$$\#(B_R^{\tilde{Z}}(z)\cap\mathcal{F})\leq CV_{R},$$
	where $\tilde Z$ is as in \eqref{eq tilde Z}.
	\end{lemma}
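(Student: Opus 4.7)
The plan is to reduce the count on $\mathcal{F}$ to one of orbit representatives in $Y$, and then to invoke the second defining property of $X_0$ from subsection \ref{subsubsec:discretizing}. The improvement over Lemma \ref{lem:interactions}, whose bound is $CV_R^2$, comes from the fact that $\mathcal{F}$ contains at most one point from each $\Gamma$-orbit in $W=\pi(Z)$; this removes one power of $V_R$ that in the previous lemma arose from multiple points of a single orbit clustering inside each element of the cover $\mathcal{U}$.

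First, I would unwind the definition $\mathcal{F}=\{(y',e)\in\tilde{Z}\,|\,y'\in Y\}$ and observe that the projection $\textnormal{pr}$ restricts to a bijection $\mathcal{F}\xrightarrow{\cong}Y$. Writing $z=(y,e)$ with $y\in Y$, one has $d^{\tilde{Z}}((y,e),(y',e))=d(y,y')$ for any $y'\in Y$, so
$$\#(B_R^{\tilde{Z}}(z)\cap\mathcal{F})=\#(B_R(y)\cap Y).$$
Since $Y$ contains exactly one representative of each orbit in $W$, the map $y'\mapsto\Gamma\cdot y'$ is injective on $Y$, and hence
$$\#(B_R(y)\cap Y)\leq\#\{O\in W\,|\,O\cap B_R(y)\neq\emptyset\}.$$

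Finally, since $\Gamma\cdot y\in W=\pi(Z)$, there is some $x_0\in X_0$ with $\Gamma\cdot y=\Gamma\cdot x_0$, and therefore $B_R(y)\subseteq B_R(\Gamma\cdot x_0)$. The second defining property of $X_0$ then yields
$$\#\{O\in W\,|\,O\cap B_R(y)\neq\emptyset\}\leq\#\pi\bigl(B_R(\Gamma\cdot x_0)\cap(\Gamma\cdot X_0)\bigr)\leq CV_R/U_{r_0}.$$
Combining the three inequalities gives the desired bound with constant $C/U_{r_0}$, which is finite by bounded Riemannian geometry. I do not foresee any real obstacle beyond the observation that the $X_0$-counting property, which is stated for $x\in X_0$, applies equally well at an arbitrary $y\in Y$ via the inclusion $B_R(y)\subseteq B_R(\Gamma\cdot x_0)$; the remaining steps are routine bookkeeping.
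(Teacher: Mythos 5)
Your proof is correct and follows essentially the same route as the paper: the paper's one-line proof ("as in Lemma \ref{lem:interactions}, without the orbit direction") amounts precisely to the step you isolate, namely that $\mathcal{F}$ meets each $\Gamma$-orbit of $\tilde Z$ at most once, so the count reduces to the number of orbits of $Z$ meeting an $R$-ball, which the second defining property of $X_0$ bounds by $CV_R/U_{r_0}$. Your extra bookkeeping (the bijection $\mathcal{F}\cong Y$ and the inclusion $B_R(y)\subseteq B_R(\Gamma\cdot x_0)$ to transfer the $X_0$-property to a representative $y\in Y$) is a faithful elaboration of that argument, not a different one.
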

	\begin{proof}
	This follows from the proof of Lemma \ref{lem:interactions}, but without the need to consider the orbit direction.
	\end{proof}
	We are led to the following lemma, which in particular implies Proposition \ref{prop:finiteness}:
	\begin{lemma}
	\label{lem:keyestimate}
	For any $T\in\mathbb{C}[M]^\Gamma$ and any $*$-homomorphism $\phi\colon\mathbb{C}[M]^\Gamma\rightarrow\mathcal{B}(H')$, where $H'$ is a Hilbert space,
		$$\norm{\phi(T)}_{\mathcal{B}(H')}\leq CC_TV^4_{\textnormal{prop}(T)},$$
		where $C_T\coloneqq\sup_{w,z\in\tilde{Z}}\norm{T_{wz}}$ and $C$ is a constant independent of $T$.
	\end{lemma}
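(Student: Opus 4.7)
The plan is to decompose $T$ as a finite sum of $\Gamma$-invariant pieces in $\mathbb{C}[M]^\Gamma$, each with $*$-homomorphic image of norm at most $C_T$, and to control the number of pieces by a suitable power of $V_R$, where $R := \textnormal{prop}(T)$.

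Using Proposition \ref{prop:Hilbertiso}, view $T$ as a $\tilde{Z} \times \tilde{Z}$-matrix with entries $T_{wz}\in\mathcal{K}(H)$ (compact by local compactness of $T$) satisfying $\|T_{wz}\| \leq C_T$, $T_{wz}=0$ for $d^{\tilde{Z}}(w,z)>R+c$, and $T_{gw,gz}=T_{wz}$ for all $g\in\Gamma$. Decompose $T = \sum_\alpha T^\alpha$ where $\alpha$ ranges over $\Gamma$-orbits of nonzero matrix positions in $\tilde{Z}^2$. Since $\Gamma$ acts freely on $\tilde{Z}$, each $T^\alpha$ has at most one nonzero entry per row and per column, and factors as $S^\alpha \otimes T_{w_\alpha,z_\alpha}$ for a $\Gamma$-equivariant partial isometry $S^\alpha \in \mathcal{B}(l^2(\tilde{Z}))$. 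Group these orbits via a $\Gamma$-equivariant greedy coloring of the \emph{conflict graph}, whose edges join orbits sharing a row or a column of $\tilde{Z}$: Lemma \ref{lem:interactions} bounds each orbit's conflict degree by $O(V^2_{R+c})$, so a proper coloring uses at most $CV^4_{R+c}$ colors (a crude bound that matches the target). Within each class $j$, $T_j := \sum_{\alpha \in \text{class } j} T^\alpha \in \mathbb{C}[M]^\Gamma$ retains the one-entry-per-row-and-column property and its component pieces $T^\alpha$ have mutually disjoint row- and column-supports.

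To bound $\|\phi(T_j)\|$ under an arbitrary $*$-homomorphism $\phi$, apply the $C^*$-identity: $\|\phi(T_j)\|^2 = \|\phi(T_j^* T_j)\|$. The disjoint-supports condition kills cross terms, giving $T_j^* T_j = \sum_\alpha (T^\alpha)^* T^\alpha$ as a pairwise-orthogonal (products vanishing in the algebra) sum of positive elements. Each summand has the explicit form $(T^\alpha)^* T^\alpha = P^\alpha \otimes (T_{w_\alpha,z_\alpha})^* T_{w_\alpha,z_\alpha}$ with $P^\alpha$ the projection onto $l^2(\Gamma z_\alpha)$. Spectrally decompose the compact positive operator $(T_{w_\alpha,z_\alpha})^* T_{w_\alpha,z_\alpha} = \sum_n \lambda_n p_n$, with $\lambda_n \in [0, C_T^2]$ and $p_n$ rank-one orthogonal projections; this realises $(T^\alpha)^* T^\alpha$ as an operator-norm-convergent sum $\sum_n \lambda_n (P^\alpha \otimes p_n)$ of mutually orthogonal projections in $\mathbb{C}[M]^\Gamma$. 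These projections generate an abelian $C^*$-subalgebra of $\mathcal{B}(\mathcal{H}_M)$ containing $(T^\alpha)^* T^\alpha$, and a continuity argument shows that $\phi$ restricts contractively to it, yielding $\|\phi((T^\alpha)^* T^\alpha)\| \leq \sup_n \lambda_n \leq C_T^2$. Combined with the mutual orthogonality of the $(T^\alpha)^* T^\alpha$, this gives $\|\phi(T_j^* T_j)\| \leq \sup_\alpha \|\phi((T^\alpha)^* T^\alpha)\| \leq C_T^2$, hence $\|\phi(T_j)\| \leq C_T$.

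The triangle inequality over the $\leq CV^4_{R+c}$ color classes, combined with the comparison $V_{R+c}\leq C'V_R$ afforded by Lemma \ref{lem:gromovbishop}, delivers the desired estimate $\|\phi(T)\| \leq CC_T V^4_R$. The main technical obstacle lies in the continuity step establishing $\|\phi((T^\alpha)^* T^\alpha)\| \leq C_T^2$: while the generators $P^\alpha \otimes p_n$ lie in $\mathbb{C}[M]^\Gamma$, the element $(T^\alpha)^* T^\alpha$ is obtained only as an operator-norm limit of their finite combinations, so one must verify that this limit is respected by any $*$-homomorphism. The resolution exploits the domination of each finite partial sum by a single fixed compact projection in the algebra, producing a uniform majorant that passes cleanly through $\phi$.
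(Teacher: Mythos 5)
Your overall architecture matches the paper's: view $T$ as a $\Gamma$-invariant $\tilde{Z}\times\tilde{Z}$-matrix, split it into roughly $V^4_{\textnormal{prop}(T)}$ pieces each having at most one nonzero entry per row and column, bound each piece under $\phi$ via the $C^*$-identity, and finish with the triangle inequality. The combinatorial part (orbit decomposition plus a bounded-degree coloring) is an acceptable variant of the paper's partition of $\mathcal{F}$ and of the set $\mathcal{Q}$, modulo the usual slack in the volume constants. The genuine gap is in the analytic step, which is precisely the point the whole lemma exists to address: you must bound $\norm{\phi(T_j^*T_j)}$ for a $*$-homomorphism $\phi$ that is \emph{a priori unbounded} on $\mathbb{C}[M]^\Gamma$, so no operator-norm limit may be pushed through $\phi$ without justification. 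Your proposed fix --- dominating each finite partial sum $\sum_{n\leq N}\lambda_n(P^\alpha\otimes p_n)$ by ``a single fixed compact projection in the algebra'' --- does not exist in general: the entries $T_{w_\alpha z_\alpha}$ are merely compact, so when they have infinite rank the only candidate majorant, $P^\alpha$ tensored with the range projection of $(T_{w_\alpha z_\alpha})^*T_{w_\alpha z_\alpha}$, has infinite-rank (non-compact) diagonal entries, is not locally compact, and hence is not an element of $\mathbb{C}[M]^\Gamma$. For the same reason, your identity $\phi(T_j^*T_j)=\sum_\alpha\phi((T^\alpha)^*T^\alpha)$ is unjustified: the sum over $\alpha$ is infinite (a color class contains infinitely many orbits when $M/\Gamma$ is noncompact) and converges only strongly, not in norm, so term-by-term application of an a priori unbounded $\phi$ is not legitimate.

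The correct repair --- and the paper's key observation, which makes your spectral-decomposition detour unnecessary --- is that $T_j^*T_j$ already lies in the diagonal subalgebra $l^\infty(\tilde{Z},\mathcal{K}(H))^\Gamma\subseteq\mathbb{C}[M]^\Gamma$, which is norm-closed and hence a genuine $C^*$-algebra. A $*$-homomorphism from a $C^*$-algebra into $\mathcal{B}(H')$ is automatically contractive, so the restriction of $\phi$ to this subalgebra satisfies
$$\norm{\phi(T_j^*T_j)}_{\mathcal{B}(H')}\leq\norm{T_j^*T_j}_{\mathcal{B}(\mathcal{H}_M)}\leq C_T^2,$$
with no limit ever passed through $\phi$; completeness of the subalgebra is what substitutes for the continuity you were trying to manufacture. (If you prefer your smaller commutative algebra generated by the projections $P^\alpha\otimes p_n$, its norm closure is a $c_0$-type $C^*$-subalgebra of $\mathbb{C}[M]^\Gamma$ containing $(T^\alpha)^*T^\alpha$, and the same automatic contractivity applies --- but you would still face the unjustified infinite sum over $\alpha$, which the diagonal-subalgebra argument avoids entirely.) Two smaller points: your conflict-degree bound $O(V^2_{R+c})$ undercounts row conflicts (one gets roughly $V^3$, still absorbed by your $V^4$ coloring bound), and the comparison $V_{R+c}\leq C'V_R$ is not what Lemma \ref{lem:gromovbishop} states --- it needs the relative Bishop--Gromov inequality, though the paper is similarly brief on this.
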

	\begin{proof}
	Let $\mathcal{T}$ denote the operator whose $\tilde{Z}\times\tilde{Z}$-matrix entries are equal to those of $T$ on $B^{\tilde{Z}}_{\textnormal{prop}(T)+c}(\mathcal{F})\times\mathcal{F}$, with all others being zero. It follows from the proof of \cite[Lemma 3.4]{GWY} that we can write $\mathcal{F}$ as a disjoint union of subsets $\mathcal{F}_1,\mathcal{F}_2,\ldots,\mathcal{F}_{L_1+1}$ with the property that if $w,z\in\mathcal{F}_i$ for some $i$, then $d^{\tilde{Z}}(w,z)>2\,\textnormal{prop}(T)+3c$. Let 
	$$\mathcal{Q}=\{(z',z)\in\tilde{Z}\times\mathcal{F}\,|\,d^{\tilde{Z}}(z',z)\leq \,\textnormal{prop}(T)+c\}.$$
	Write $\mathcal{Q}_i=\mathcal{Q}\cap(\tilde{Z}\times\mathcal{F}_i)$. By Lemmas \ref{lem:interactions}, \ref{lem:interactionsfundamentaldomain}, and \ref{lem:gromovbishop}, there exist $C_1,C_2>0$ such that for any $z\in\mathcal{F}$,
	\begin{align*}
	\#(B^{\tilde{Z}}_{2\,\textnormal{prop}(T)+3c}(z)\cap\mathcal{F})&\leq C_1 V_{2\,\textnormal{prop}(T)},\\
	\#B^{\tilde{Z}}_{\,\textnormal{prop}(T)+c}(z)&\leq C_2 V^2_{\,\textnormal{prop}(T)}.
	\end{align*}
	Note that $C_1$ and $C_2$ are independent of $T$. Setting 
	$$L_1\coloneqq\big\lfloor C_1 V_{2\,\textnormal{prop}(T)}\big\rfloor,\qquad L_2\coloneqq\big\lfloor C_2 V^2_{\,\textnormal{prop}(T)}\big\rfloor,$$ 
	one sees that for any $z\in\mathcal{F}_i$, there are at most $L_2$ elements $z'\in\tilde{Z}$ such that $(z',z)\in\mathcal{Q}_i$. Thus there exists a disjoint union decomposition
	$$\mathcal{Q}=\bigsqcup_{i=1}^{(L_1+1)L_2}\mathcal{P}_i,$$
	where the sets $\mathcal{P}_i$ have the property that, for any two distinct elements $(w,z)$ and $(w',z')\in\mathcal{P}_i$, we have $d^{\tilde{Z}}(z,z')>2\,\textnormal{prop}(T)+3c$, and hence $w\neq w'$. This gives a decomposition $$\mathcal{T}=\sum_{i=1}^{(L_1+1)L_2}\mathcal{T}_i,\quad \textnormal{matsupp}(\mathcal{T}_i)\subseteq\mathcal{P}_i.$$
	Observe that, for each $i$, the operator $\mathcal{T}_i^*\mathcal{T}_i$ is $\Gamma$-invariant and has matricial support confined to the diagonal of $\mathcal{F}\times\mathcal{F}$ and so has norm at most $C_{T}^2.$
	 
	Let $l^\infty(\tilde{Z},\mathcal{K}(H))^\Gamma\subseteq\mathbb{C}[M]^\Gamma$ denote the $*$-subalgebra of $\Gamma$-invariant operators whose matrix entries belong to the diagonal of $\tilde{Z}\times\tilde{Z}$. Since it is a $C^*$-algebra, the norm of any operator $T'\in l^\infty(\tilde{Z},\mathcal{K}(H))^\Gamma$ contracts under any $*$-representation of $\mathbb{C}[M]^\Gamma$. Applying this to $T'=\mathcal{T}_i^*\mathcal{T}_i$, and using Lemma \ref{lem:gromovbishop}, we get:
	\begin{align*}
	\norm{\phi(T)}_{\mathcal{B}(H')}&\leq\sum_{i=1}^{(L_1+1)L_2}\norm{\phi(\mathcal{T}_i)}_{\mathcal{B}(H')}
\leq (L_1+1)L_2 C_T\leq CC_T V^4_{\textnormal{prop}(T)},
	\end{align*}
	for some $C$ independent of $T$.
	\end{proof}
		\subsection{Maximal Roe algebras}
		\label{subsec:maxRoe}
	\hfill\vskip 0.05in	
\noindent It follows from Lemma \ref{lem:keyestimate} that the norm of an operator $T\in\mathbb{C}[M]^\Gamma$ in any $*$-representation has a finite bound independent of the $*$-representation. This allow us to define several versions of the maximal equivariant Roe algebra, as follows.

The first is the algebra $C^*_{\textnormal{max}}(M)^\Gamma$ defined on the admissible module $\mathcal{H}_M$ as the completion of $\mathbb{C}[M]^\Gamma$ in the norm $\norm{\cdot}_{\textnormal{max}}$ (see Definition \ref{def:maximalnorm}). 

We also have:
\begin{definition}
The maximal equivariant Roe algebra on the $M$-$\Gamma$-module $L^2(S)$, denoted by $C^*_{\textnormal{max}}(M;L^2(S))^\Gamma$, is the completion of $\mathbb{C}[M;L^2(S)]^\Gamma$ under the norm pulled back under the injective $*$-homomorphism given by the composition
$$\mathbb{C}[M;L^2(S)]^\Gamma\xrightarrow{\oplus\,0}\mathbb{C}[M;L^2(S)]\oplus 0\hookrightarrow\mathbb{C}^*_{\textnormal{max}}(M)^\Gamma.$$
	\end{definition}

Finally, we define a \emph{uniform} version of the maximal Roe algebra on the module $L^2(S)$ as the completion in $C^*_{\textnormal{max}}(M;L^2(S))^\Gamma$ of a certain space Schwartz kernels. This algebra will play a key role in the results of the next section. Recall:

	\begin{definition}
		A section $k$ of $\End(S)\cong S\boxtimes S^*$ has \emph{finite propagation} if there exists an $R>0$ such that for all $x, y\in M$,
		$$d(x, y)>R\implies k(x, y)=0.$$ 
		The infimum of such $R$ is called the \emph{propagation} of $k$, denoted by $\textnormal{prop}(k)$.
	\end{definition}
	\begin{definition}
		Let $\mathcal{S}_u^\Gamma$ denote the $*$-subalgebra of $\mathcal{B}(L^2(S))$ whose elements are given by Schwartz kernels $k\in C_b^\infty(S\boxtimes S^*)$ that satisfy:
		\begin{enumerate}[(i)]
			\item $k$ has finite propagation;
			\item $k(x,y)=k(gx,gy)$ for all $g\in\Gamma$;
			\item Each covariant derivative of $k(x,y)$ is uniformly bounded over $M\times M$.
		\end{enumerate}
	\end{definition}
	Note that properties (i) and (ii) imply that $\mathcal{S}_u^\Gamma$ is a $*$-subalgebra of $\mathbb{C}[M;L^2(S)]^\Gamma$.
		\begin{definition}
	The \emph{maximal equivariant uniform Roe algebra} of $M$ on $L^2(S)$, denoted by $C^*_{\text{max},u}(M;L^2(S))^\Gamma$, is the completion of $\mathcal{S}_u^\Gamma$ in $C^*_{\textnormal{max}}(M;L^2(S))^\Gamma$.
	\end{definition}
		
	\begin{remark}
		Elements of $\mathcal{S}_u^\Gamma$ are approximable on each local piece of the manifold by finite-rank operators in a way that is uniform across the manifold. The completion of $\mathcal{S}_u^\Gamma$ in the operator norm on $\mathcal{B}(L^2(S))$ is referred to as the \emph{reduced} equivariant uniform Roe algebra on the module $L^2(S)$.
	\end{remark}
	\hfill\vskip 0.3in

	\section{Functional calculus}
	\label{sec:regularity}
	We now use the estimates established in the previous section to complete a key step in the proof of Theorem \ref{thm:main}, namely to establish a functional calculus for the unbounded operator $D$ on the maximal equivariant uniform Roe algebra. The main result of this section is Theorem \ref{thm:regularity}. A basic reference for the material on Hilbert $C^*$-modules used in this section is \cite{Lance}.

\subsection{A Hilbert module operator}
\label{subsection hilbert operator}
	\hfill\vskip 0.05in
	\noindent 	We view the $C^*$-algebra $C^*_{\text{max},u}(M;L^2(S))^\Gamma$ as a right Hilbert module over itself. The inner product and right action on $C^*_{\text{max},u}(M;L^2(S))^\Gamma$ are defined naturally through multiplication: for $a,b\in C^*_{\text{max},u}(M;L^2(S))^\Gamma$,
	$$\langle a,b\rangle=a^*b,\qquad a\cdot b=ab,$$
	where the adjoint is defined on the kernel algebra $\mathcal{S}_u^\Gamma$ in the usual way. The algebra of compact operators on this Hilbert module can be identified with $C^*_{\textnormal{max},u}(M;L^2(S))^\Gamma$ via left multiplication. Similarly, the algebra of bounded adjointable operators can be identified with the multiplier algebra $\mathcal{M}$ of $C^*_{\textnormal{max},u}(M;L^2(S))^\Gamma$.

	We first show that $D$ can be viewed as an unbounded operator on this Hilbert module. 
	The Dirac operator $D$ acts naturally on smooth sections of $M\times M$ as follows: for each $s\in\mathcal{S}_u^\Gamma$, define $Ds$ to be the section
	$$(x,y)\mapsto D_x s(x,y),$$
	where $D_x$ means the operator $D$ acting on the $x$-coordinate. One verifies easily that $D$ is symmetric with respect to the inner product structure defined above. 
	
	In keeping with the usual notion of an unbounded operator on a Hilbert module, we need to ensure that the domain of $D$ is a right $C^*_{\textnormal{max},u}(M;L^2(S))^\Gamma$-module. To do this, let $(C^*_{\textnormal{max},u}(M;L^2(S))^\Gamma)^+$ be the unitization of $C^*_{\textnormal{max},u}(M;L^2(S))^\Gamma$. Then the right ideal $\mathcal{S}_u^\Gamma\cdot (C^*_{\textnormal{max},u}(M;L^2(S))^\Gamma)^+$ contains $\mathcal{S}_u^\Gamma$ and admits a right action by $C^*_{\textnormal{max},u}(M;L^2(S))^\Gamma$. We can extend the action of $D$ in a natural way to $\mathcal{S}_u^\Gamma\cdot(C^*_{\textnormal{max},u}(M;L^2(S))^\Gamma)^+$ by setting, for each $a\in\mathcal{S}_u^\Gamma$ and $b\in C^*_{\textnormal{max},u}(M;L^2(S))^\Gamma$,
	$$D(ab)\coloneqq (Da)b.$$
	Note that this is well-defined, since if $ab=\widetilde{a}\widetilde{b}$, then for any $v\in\mathcal{S}_u^\Gamma$, symmetricity of $D$, together with continuity of the inner product, shows that
	\begin{align*}
	\langle (Da)b,v\rangle &=\lim_{n\to\infty}\langle (Da)b_n,v\rangle\\
	&=\langle ab,Dv\rangle\\
	&=\langle\widetilde a\widetilde b,Dv\rangle\\
	&=\langle (D\widetilde a)\widetilde b,v\rangle,
	\end{align*}
	where $b_n$ is a sequence in $\mathcal{S}_u^\Gamma$ converging to $b$. Density of $\mathcal{S}_u^\Gamma$ in $C^*_{\textnormal{max},u}(M;L^2(S))^\Gamma$ then implies that $(Da)b=(D\widetilde a)\widetilde b$.
	
	After taking the closure, we obtain a densely defined, closed $C^*_{\textnormal{max},u}(M;L^2(S))^\Gamma$-linear operator
	\begin{equation}
	\label{eq:Dl}
	\overline{D}\colon C^*_{\textnormal{max},u}(M;L^2(S))^\Gamma\rightarrow C^*_{\textnormal{max},u}(M;L^2(S))^\Gamma.
	\end{equation}
	Further, for each $l\in\mathbb{N}$, the operator $\overline{D}^l$ is a densely defined, closed $C^*_{\textnormal{max},u}(M;L^2(S))^\Gamma$-linear operator on $C^*_{\textnormal{max},u}(M;L^2(S))^\Gamma$.
	
	We make two remarks. First, since the action of $D$ on $\mathcal{S}_u^\Gamma\cdot (C^*_{\textnormal{max},u}(M;L^2(S))^\Gamma)^+$ is determined by its action on $\mathcal{S}_u^\Gamma$, in practice we may just work with the latter. Second, for the sake of brevity, we will simply write $D$ to mean its closure $\overline{D}$ where confusion is unlikely to arise.
	
	\subsection{Regularity and essential self-adjointness}
	\hfill\vskip 0.05in
	\noindent Let us state main result of this section:
	\begin{theorem}
	\label{thm:regularity}
	There exists a real number $\mu\neq 0$ such that the unbounded operators
	$$D\pm\mu i\colon C^*_{\textnormal{max},u}(M;L^2(S))^\Gamma\rightarrow C^*_{\textnormal{max},u}(M;L^2(S))^\Gamma$$
	have dense range.
	\end{theorem}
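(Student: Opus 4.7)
The plan is to show, for each $b$ in the dense subalgebra $\mathcal{S}_u^\Gamma \subset C^*_{\textnormal{max},u}(M;L^2(S))^\Gamma$, that $b$ lies in the closure of the range of $D + i\mu$, with the case of $D - i\mu$ analogous, once $\mu > 0$ is taken large enough. Approximate preimages $a_R \in \mathcal{S}_u^\Gamma$ are built from a truncated resolvent. On $L^2(S)$ one has the spectral identity
\[
(D + i\mu)^{-1} = -i \int_0^\infty e^{-\mu\xi + i\xi D}\, d\xi.
\]
Choose a cutoff $\chi_R \in C_c^\infty([0, R))$ with $\chi_R \equiv 1$ on $[0, R/2]$ and $|\chi_R'| \leq 3/R$, and set $\phi_R(D) := -i \int_0^R \chi_R(\xi)\, e^{-\mu\xi + i\xi D}\, d\xi$. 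By finite propagation speed of $e^{i\xi D}$, $\phi_R(D)$ has propagation at most $R$. Combined with $\phi_R$ being Schwartz class and $M$ having bounded Riemannian geometry, this gives $a_R := \phi_R(D) b \in \mathcal{S}_u^\Gamma \subset \dom(D)$.

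The identity $\frac{d}{d\xi} e^{-\mu\xi + i\xi D} = i(D + i\mu) e^{-\mu\xi + i\xi D}$ and integration by parts, using $\chi_R(0) = 1$ and $\chi_R(R) = 0$, yield
\[
(D + i\mu)\phi_R(D) = I + E_R, \qquad E_R := \int_{R/2}^R \chi_R'(\xi)\, e^{-\mu\xi + i\xi D}\, d\xi,
\]
so $(D + i\mu)a_R = b + E_R b$, and the task reduces to showing $\|E_R b\|_{\textnormal{max}} \to 0$ as $R \to \infty$. Applying Lemma \ref{lem:keyestimate} and observing that $\textnormal{prop}(E_R b) \leq R + \textnormal{prop}(b)$ gives $\|E_R b\|_{\textnormal{max}} \leq C \cdot C_{E_R b} \cdot V^4_{R + \textnormal{prop}(b)}$. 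Under bounded geometry the matrix entries $(E_R b)_{wz}$ in the discretization of Section \ref{subsubsec:discretizing} are bounded by $\|k_{E_R b}\|_{L^\infty}$ times a uniform local-volume factor, so it suffices to bound $k_{E_R b}$ pointwise. Writing $k_{E_R b}(x, y) = \int_{R/2}^R \chi_R'(\xi)\, e^{-\mu\xi}\, (e^{i\xi D} k_b(\cdot, y))(x)\, d\xi$ and invoking uniform Sobolev embedding (valid under bounded geometry) together with the unitarity of $e^{i\xi D}$ on Sobolev spaces, one controls $|(e^{i\xi D} k_b(\cdot, y))(x)|$ by a uniformly bounded Sobolev norm of $k_b(\cdot, y)$, which is itself uniformly bounded in $y$ because $b \in \mathcal{S}_u^\Gamma$. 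This yields $C_{E_R b} \leq C' e^{-\mu R/2}$.

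Combining with $V_R \leq C_1 e^{C_2 R}$ from Lemma \ref{lem:gromovbishop} produces
\[
\|E_R b\|_{\textnormal{max}} \leq C'' e^{-\mu R/2}\, e^{4 C_2 (R + \textnormal{prop}(b))},
\]
which tends to zero as $R \to \infty$ provided $\mu > 8 C_2$. Density of $\mathcal{S}_u^\Gamma$ in the Hilbert module then yields dense range of $D + i\mu$; the case $D - i\mu$ is handled by the same argument applied to $(D - i\mu)^{-1} = i \int_0^\infty e^{-\mu\xi - i\xi D}\, d\xi$. The main technical obstacle is the pointwise kernel estimate: carefully tracking uniform Sobolev control of $k_b(\cdot, y)$ and of its wave evolution $e^{i\xi D} k_b(\cdot, y)$ via bounded geometry, and then translating these pointwise bounds into bounds on the discretized matrix entries $(E_R b)_{wz}$ in a form compatible with the norm estimate of Section \ref{subsec:estimating}.
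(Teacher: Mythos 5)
Your argument is correct in substance, but it takes a genuinely different route from the paper. The paper works with the exact resolvent: it first proves a pointwise exponential off-diagonal decay estimate for the kernel of $(D+\mu i)^{-1}$ (Proposition \ref{prop kernel}, via Lemma \ref{lem kernel bound}), applies the resolvent to $k\in\mathcal{S}_u^\Gamma$ to get $\tilde k=(D+\mu i)^{-1}k$, which has decaying but infinite-propagation kernel, then cuts off \emph{in space} with functions $f_\varepsilon$ having small gradient, and controls the resulting Clifford-commutator error $c(d_1f_\varepsilon)\tilde k$ by decomposing it into annular pieces $\tilde k_l$ at distance $\sim l$ from the diagonal and summing the estimates of Lemma \ref{lem:keyestimate} over $l$, with $\mu$ large so that $e^{-\mu l/2}$ beats $V^4_{\sqrt 2(l+1/2)}$. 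You instead truncate the wave-integral representation of the resolvent \emph{in the time variable}: finite propagation speed makes $\phi_R(D)$ have propagation $\leq R$ outright (so no spatial cutoff or annular decomposition is needed), integration by parts identifies the error $E_R$ supported at times $\geq R/2$, and a single application of Lemma \ref{lem:keyestimate} together with Lemma \ref{lem:gromovbishop} finishes once $\mu>8C_2$. Both proofs rest on the same two pillars --- the key norm estimate $\|\phi(T)\|\leq C\,C_T V^4_{\mathrm{prop}(T)}$ and exponential volume growth, with $\mu$ chosen large (independently of the approximated element) to dominate the $V^4$ factor --- but your version dispenses with Proposition \ref{prop kernel}, Corollary \ref{cor:kernel} and the $f_\varepsilon$-machinery, at the cost of having to justify directly that $\phi_R(D)b$ and $E_Rb$ lie in $\mathcal{S}_u^\Gamma$ and that the entrywise norms are controlled by the kernel sup-norm.

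Two small points to tighten. First, $\phi_R$ is \emph{not} Schwartz class: its Fourier transform $c\,\chi_R(\xi)e^{-\mu\xi}\mathbbm{1}_{[0,\infty)}(\xi)$ jumps at $\xi=0$, so $\phi_R$ only decays like the resolvent itself. This is harmless --- what you actually use is that $\phi_R(D)$ is bounded with propagation $\leq R$ and that composition with $b\in\mathcal{S}_u^\Gamma$ is smoothing; the uniform kernel bounds for $\phi_R(D)b$ and $E_Rb$ (smoothness, bounded covariant derivatives) follow by commuting powers of $D$ through the functional calculus and applying uniform Sobolev embedding under bounded geometry, exactly in the spirit of Lemma \ref{lem kernel bound}, so you should invoke that argument rather than Schwartz-class decay. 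Second, Lemma \ref{lem:keyestimate} is stated for operators on the admissible module, so your estimate should be applied to $E_Rb\oplus 0$, as the paper does; since the maximal norm on $C^*_{\textnormal{max},u}(M;L^2(S))^\Gamma$ is by definition pulled back along $\oplus\,0$, this is only a matter of phrasing.
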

	We will proceed to the proof of this theorem shortly. First observe the following consequence (\cite[Lemmas 9.7 and 9.8]{Lance}):
	\begin{corollary}\label{cor:selfadj}
	For each $l\in\mathbb{N}$, the unbounded operator $\overline{D}^l$ on the Hilbert module $C^*_{\textnormal{max},u}(M;L^2(S))^\Gamma$ is regular and self-adjoint.
	\end{corollary}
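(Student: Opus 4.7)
My plan is to obtain Corollary \ref{cor:selfadj} directly from Theorem \ref{thm:regularity} by invoking the two cited Lance lemmas. The strategy splits into two steps: first deduce that $\overline{D}$ itself is regular and self-adjoint, then promote this property to arbitrary powers $\overline{D}^l$.

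For the first step, recall from Subsection \ref{subsection hilbert operator} that $\overline{D}$ is a closed, densely defined, symmetric operator on the Hilbert C*-module $C^*_{\textnormal{max},u}(M;L^2(S))^\Gamma$. Lance's Lemma 9.7 provides a standard criterion: a closed symmetric operator $T$ on a Hilbert C*-module is regular and self-adjoint if and only if both $T + i\mu$ and $T - i\mu$ have dense range for some (equivalently every) nonzero $\mu \in \mathbb{R}$. Theorem \ref{thm:regularity} furnishes exactly this hypothesis with the module $C^*_{\textnormal{max},u}(M;L^2(S))^\Gamma$ in place of a Hilbert space, so it follows immediately that $\overline{D}$ is regular and self-adjoint.

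For the second step, I would appeal to Lance's Lemma 9.8, which states that the functional calculus of a regular self-adjoint operator by a continuous real-valued function again yields a regular self-adjoint operator. Applied to the function $f(x) = x^l$, this shows that $f(\overline{D})$, defined via the continuous functional calculus now available on $C^*_{\textnormal{max},u}(M;L^2(S))^\Gamma$, is regular and self-adjoint. The main (and mildly technical) verification is to identify $f(\overline{D})$ with the $\overline{D}^l$ introduced in Subsection \ref{subsection hilbert operator}: one must check that the algebraically defined iterated action of $D$ on $\mathcal{S}_u^\Gamma \cdot (C^*_{\textnormal{max},u}(M;L^2(S))^\Gamma)^+$ is a core for $f(\overline{D})$. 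This step is expected to be routine, since the Dirac operator preserves finite propagation, smoothness, and uniform boundedness of all covariant derivatives, so $D$ maps $\mathcal{S}_u^\Gamma$ into itself and $\mathcal{S}_u^\Gamma$ is stable under every power $D^l$; the closure of $D^l$ on this domain then must coincide with $f(\overline{D})$. Combining the two steps gives regularity and self-adjointness of $\overline{D}^l$ for every $l \in \mathbb{N}$.
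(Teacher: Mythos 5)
For $l=1$ your argument is exactly the paper's: the proof of Corollary \ref{cor:selfadj} given there consists precisely of combining Theorem \ref{thm:regularity} with Lance's criterion that a closed, densely defined, symmetric module operator $t$ with $t\pm i$ of dense range is regular and self-adjoint (your rescaling remark disposes of $\mu\neq 1$). One attribution slip: Lemmas 9.7 and 9.8 of \cite{Lance} are both parts of that dense-range criterion; the statement you ascribe to Lemma 9.8 --- that a continuous real-valued function of a regular self-adjoint operator is again regular and self-adjoint --- is rather the functional calculus of Theorem \ref{thm:functionalcalculus} (Lance's Theorem 10.9 together with \cite{Kucerovsky}), which does yield $f(\overline D)\in\mathcal{R}_B(\mathcal{N})$ with $f(\overline D)^*=f(\overline D)$ for real $f$. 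So the ingredient you invoke exists, just not where you cite it.

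The genuine soft spot is the identification $f(\overline D)=\overline D^{\,l}$ for $l\geq 2$, which is the whole content of the corollary beyond $l=1$ and which you assert rather than prove. Knowing that $D$ maps $\mathcal{S}_u^\Gamma$ into itself shows that $\mathcal{S}_u^\Gamma$ is a dense subspace of the domain of every power, but invariance of a dense subspace under the operator does not make it a core: already for $-i\,d/dx$ on $L^2(\mathbb{R})$ the dense invariant subspace $C_c^\infty(\mathbb{R}\setminus\{0\})$ fails to be a core, so ``the closure of $D^l$ on this domain must coincide with $f(\overline D)$'' is a non sequitur as written. To close it you could either (a) prove (or cite) that for a regular self-adjoint module operator the composition power agrees with the functional-calculus power --- this can be extracted from the bounded-transform construction but is a statement requiring proof, not a consequence of invariance --- together with the check that $\pi_{\overline D}(x^l)$ acts as $D^l$ on $\mathcal{S}_u^\Gamma$; or (b) bypass the functional calculus and rerun the scheme of Theorem \ref{thm:regularity} for the symmetric operator $D^l$ on $\mathcal{S}_u^\Gamma$: by partial fractions $(x^l\pm i)^{-1}$ is a linear combination of resolvents $(x-z_j)^{-1}$ with $z_j$ non-real, so the kernel-decay argument of Proposition \ref{prop kernel} applies, and the cutoff computation goes through with $[D^l,f_\varepsilon]$ contributing finitely many terms each controlled by derivatives of $f_\varepsilon$; Lance's criterion then applies to $D^l$ directly. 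Since the paper only uses the (correctly handled) case $l=1$ later on, the omission is localized, but as a proof of the stated corollary the $l\geq 2$ step needs one of these arguments.
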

	Regular and essentially self-adjoint operators on a Hilbert $C^*$-module admit a functional calculus that satisfy the following set of properties \cite[Theorem 10.9]{Lance}, \cite[Proposition 16]{Kucerovsky}:
\begin{theorem}\label{thm:functionalcalculus}
Let $B$ be a $C^*$-algebra and $\mathcal{N}$ a Hilbert $B$-module. Let $C(\mathbb{R})$ be the $*$-algebra of complex-valued continuous functions on $\mathbb{R}$. For any regular, essentially self-adjoint operator $T$ on $\mathcal{N}$, there is a $*$-preserving linear map
	\begin{align*}
	\pi_T\colon C(\mathbb{R})&\rightarrow\mathcal{R}_B(\mathcal{N}),\\
	f&\mapsto f(T)\coloneqq\pi_T(f),
	\end{align*}
where $\mathcal{R}_B(\mathcal{N})$ denotes the regular operators on $\mathcal{N}$, such that:	\begin{enumerate}
		\item[(i)] $\pi_T$ restricts to a $*$-homomorphism $C_b(\mathbb{R})\rightarrow\mathcal{L}_B(\mathcal{N})$;
		\item[(ii)] If $|f(t)|\leq|g(t)|$ for all $t\in\mathbb{R}$, then $\textnormal{dom}(g(T))\subseteq\textnormal{dom}(f(T))$;
		\item[(iii)] If $(f_n)_{n\in\mathbb{N}}$ is a sequence in $C(\mathbb{R})$ for which there exists $F\in C(\mathbb{R})$ such that $|f_n(t)|\leq F(t)|$ for all $t\in\mathbb{R}$, and if $f_n$ converge to a limit function $f \in C(\R)$ uniformly on compact subsets of $\mathbb{R}$, then $f_n(T)x\mapsto f(T)x$ for each $x\in\textnormal{dom}(f(T))$;
		\item[(iv)] $\textnormal{Id}(T)=T$, where $\textnormal{Id}$ is the function $t\mapsto t$.
	\end{enumerate}
\end{theorem}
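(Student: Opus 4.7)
The approach is to reduce the construction of $\pi_T$ to the standard $C^*$-algebra continuous functional calculus applied to the \emph{bounded transform} of $T$, and then to promote this from bounded to unbounded continuous functions by truncation and closure. Replacing $T$ by its closure (which by hypothesis is regular and self-adjoint on $\mathcal{N}$), the first step is to form
\[ z := T(1+T^2)^{-1/2} \in \mathcal{L}_B(\mathcal{N}). \]
Standard Hilbert-module theory shows that $z$ is self-adjoint with $\|z\|\le 1$ and that regularity of $T$ is equivalent to $1-z^2$ having dense range in $\mathcal{N}$.

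For $f \in C_b(\mathbb{R})$, define $\tilde{f}(s) := f(s/\sqrt{1-s^2})$ on $(-1,1)$ and extend continuously to $\pm 1$; the choice of extension is immaterial because it only changes $\tilde{f}(z)$ on $(1-z^2)\mathcal{N}$, whose closure is all of $\mathcal{N}$. Set $f(T) := \tilde{f}(z)$, interpreted via the ordinary continuous functional calculus inside the unital commutative $C^*$-subalgebra of $\mathcal{L}_B(\mathcal{N})$ generated by $z$. This yields a $*$-homomorphism $C_b(\mathbb{R}) \to \mathcal{L}_B(\mathcal{N})$, giving property (i). For unbounded $f \in C(\mathbb{R})$, pick $\chi_n \in C_c(\mathbb{R})$ with $0\le \chi_n\le 1$ and $\chi_n \to 1$ uniformly on compact sets, so $f\chi_n \in C_b(\mathbb{R})$, and set
\[ \mathrm{dom}(f(T)) := \{x \in \mathcal{N} : \lim_n (f\chi_n)(T)x \text{ exists in } \mathcal{N}\}, \qquad f(T)x := \lim_n (f\chi_n)(T)x. \]
Independence of the choice of $(\chi_n)$ is an application of the $C^*$-functional calculus on $z$, and density of the domain follows because, for $g_k(t) := (1+t^2)^{-k/2}$ with $k$ chosen so that $fg_k \in C_b(\mathbb{R})$, every element of the dense submodule $g_k(T)\mathcal{N}$ lies in $\mathrm{dom}(f(T))$. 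Property (iv) for $\mathrm{Id}$ is then checked directly from the definition of the bounded transform, property (ii) is an immediate domain comparison, and (iii) follows from pointwise-dominated convergence within the commutative $C^*$-algebra generated by $z$.

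The main technical obstacle is verifying that the unbounded operator $f(T)$ just constructed is \emph{regular}, i.e., that $1 + f(T)^*f(T)$ has dense range in $\mathcal{N}$. Via the bounded transform this reduces to showing that the bounded adjointable operator corresponding under the calculus on $z$ to the symbol $(1 + |\tilde{f}(s)|^2)^{-1}$ has dense range; since this symbol is strictly positive and bounded on $(-1,1)$, and the regularity of $T$ prevents $z$ from having ``spectral mass'' concentrated at $\pm 1$, the resulting operator is in fact invertible within $\mathcal{L}_B(\mathcal{N})$. Once regularity is in hand, $f(T)$ coincides with the closure of its restriction to $g_k(T)\mathcal{N}$, completing the construction of $\pi_T$ and the verification of all four properties.
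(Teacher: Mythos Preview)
The paper does not give its own proof of this theorem: it is quoted as a known result with references to Lance (Theorem~10.9) and Kucerovsky (Proposition~16). Your sketch is precisely the bounded-transform approach of those references, so you are essentially reconstructing the cited proof rather than offering an alternative.

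Two steps in your write-up need tightening before it can stand on its own. First, for a general $f\in C_b(\mathbb{R})$ the function $\tilde f(s)=f\bigl(s/\sqrt{1-s^2}\bigr)$ need not extend continuously to $s=\pm 1$ (take $f(t)=\sin t$), so $\tilde f(z)$ is not directly defined by the continuous functional calculus on $[-1,1]$; your claim that ``the choice of extension is immaterial'' misses the point, since a continuous extension may fail to exist at all. The usual remedy is to define $f(T)$ first for $f\in C_0(\mathbb{R})$ or $f\in C([-\infty,\infty])$ and then pass to $C_b(\mathbb{R})$ via a strict-limit argument, using that $(1-z^2)^{1/2}$ has dense range. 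Second, in your regularity paragraph the symbol $(1+|\tilde f(s)|^2)^{-1}$ tends to $0$ at $\pm 1$ whenever $f$ is unbounded, so the corresponding operator is \emph{not} invertible in $\mathcal{L}_B(\mathcal{N})$; it has dense range (which is what regularity requires), but your assertion of invertibility is too strong and should be replaced by the dense-range statement together with the observation that density of the range of $(1-z^2)$ is exactly the regularity hypothesis on $T$.
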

	In the rest of this section, we will finish the proof of Theorem \ref{thm:regularity}. 
	
	Let $f_\mu\colon \mathbb{R}\rightarrow\mathbb{C}$ be the function $x\mapsto (x+\mu i)^{-1}$. Let $K_{f_\mu}$ denote the Schwartz kernel of the bounded operator $f_\mu(D)$. Since $K_{f_\mu}$ is pseudodifferential, it is smooth on the complement of the diagonal. Furthermore, it satisfies the following estimate:
	\begin{proposition}
		\label{prop kernel}
		There exists $C_{\mu}>0$ such that for all $x,y\in M$ with $d(x,y)\geq 1$,		$$\norm{K_{f_\mu}(x,y)}\leq C_\mu e^{-\frac{\mu}{2}d(x,y)},$$
		where $\norm{\cdot}$ denotes the fiberwise norm on $S\boxtimes S^*$.
	\end{proposition}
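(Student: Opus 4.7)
The plan is to exploit the unit propagation speed of the wave operator $e^{-itD}$ together with the Laplace-transform representation of the resolvent. Specifically, by the spectral theorem, for $\mu > 0$ one has
\[
(D + \mu i)^{-1} = -i \int_0^\infty e^{-\mu t}\, e^{-itD}\, dt
\]
as bounded operators on $L^2(S)$ (the case $\mu < 0$ is handled by integrating over $(-\infty, 0)$ instead). Since $e^{-itD}$ has unit propagation speed (its Schwartz kernel is supported in $\{(x,y) : d(x,y) \le |t|\}$), testing against smooth sections $u, v$ with $d(\mathrm{supp}\,u, \mathrm{supp}\,v) \ge R$ kills the integrand for $t < R$, yielding the weak estimate
\[
|\langle (D+\mu i)^{-1} u, v\rangle| \;\le\; \mu^{-1} e^{-\mu R}\,\|u\|_{L^2}\|v\|_{L^2}.
\]

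The second step is to upgrade this weak $L^2$ estimate to a pointwise kernel estimate. Here I would use bounded Riemannian geometry to obtain uniform Sobolev embedding constants in normal coordinate charts of fixed radius. The kernel $K_{f_\mu}$ is smooth off the diagonal by elliptic regularity applied to $(D_x + \mu i) K_{f_\mu}(x,y) = \delta_{x,y}\cdot\mathrm{id}_S$, and for $d(x,y) \ge 1$ a Sobolev embedding argument expresses the pointwise value $\|K_{f_\mu}(x,y)\|$ as a finite sum of $L^2$ matrix coefficients $\langle (D+\mu i)^{-1} u_\alpha, v_\beta\rangle$ with $u_\alpha, v_\beta$ running through bump functions (and their Dirac derivatives up to sufficiently high order) supported in small balls around $x$ and $y$. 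Applying the weak estimate and summing over finitely many such pairs gives the desired exponential bound. The factor $1/2$ in the exponent can be accommodated by splitting $e^{-\mu R} = e^{-\mu R/2}\cdot e^{-\mu R/2}$, using one factor to absorb the polynomial prefactors coming from the Sobolev embedding and from any powers of $D$ needed in the regularity step.

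The main obstacle is ensuring uniformity of all constants in $(x, y) \in M\times M$: this is exactly where bounded Riemannian geometry enters essentially, both to guarantee unit propagation speed for $e^{-itD}$ (with no shrinking of injectivity radius as $x$ varies) and to yield uniform Sobolev constants across coordinate charts. An alternative, possibly cleaner route is to combine Davies-type Gaussian upper bounds for the heat kernel of $D^2$ with the identity $(D+\mu i)^{-1} = (D - \mu i)(D^2 + \mu^2)^{-1}$ and the Mellin-type expression $(D^2 + \mu^2)^{-1} = \int_0^\infty e^{-t(D^2+\mu^2)}\, dt$; the exponential bound then follows from a saddle-point analysis of the $t$-integral, the factor $(D-\mu i)$ being handled by standard gradient estimates on the heat kernel under bounded geometry.
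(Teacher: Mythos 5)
Your proposal is correct and follows essentially the same route as the paper: the resolvent is expressed as a Fourier/Laplace superposition of wave operators, unit propagation speed eliminates the contribution of times smaller than the separation $d(x,y)$ (leaving the exponentially small tail of $e^{-\mu t}$), and a bounded-geometry Sobolev-embedding argument (the content of the paper's Lemma \ref{lem kernel bound}) upgrades the resulting estimates to a uniform pointwise bound on the kernel. The only difference is one of packaging: the paper truncates $\hat f_\mu$ by a cutoff $\phi_\lambda$ supported in $|\xi|\geq\lambda/2$ and bounds the global operator norms $\|D^l g_\mu(D)\|$, while you obtain the localized matrix-coefficient estimate directly from support separation and then spend half the exponent to absorb the Sobolev prefactors, exactly as the paper spends half of it in the truncation.
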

	To prove this, we will use the following lemma, which is an adaptation of \cite[Lemma 3.5]{Jinmin} to the bounded geometry setting.
	\begin{lemma}
	\label{lem kernel bound}
	Let $T$ be a bounded linear operator on $L^2(S)$ such that
	$$\sup_{k+j\leq\frac{3}{2}\dim M+3}\norm{D^k TD^j}_{\mathcal{B}(L^2(S))}<\infty.$$
	Then $T$ is an integral operator with a continuous Schwartz kernel $K_T(x,y)$, and there exists $C>0$, independent of $T$, such that
	$$\sup_{x,y\in M}\norm{K_T(x,y)}\leq C\cdot\sup_{k+j\leq\frac{3}{2}\dim M+3}\norm{D^k TD^j}_{\mathcal{B}(L^2(S))},$$
	where $\norm{\cdot}$ denotes the fiberwise norm on $S\boxtimes S^*$.
	\end{lemma}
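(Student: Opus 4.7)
The plan is to establish this kernel bound via uniform Sobolev embedding on $M$, which is available thanks to the bounded Riemannian geometry hypothesis. The overall strategy is to turn the hypothesis on $\|D^k T D^j\|_{\mathcal{B}(L^2(S))}$ into a bound on $T$ as a map between Sobolev spaces of sufficiently high order in each variable, then apply uniform Sobolev embedding $H^s \hookrightarrow C^0$ to pass from Sobolev norms to pointwise values of the Schwartz kernel. The whole argument proceeds without any appeal to compactness of $M$; bounded geometry is what replaces compactness.

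First I would set up uniform Sobolev machinery. Bounded Riemannian geometry guarantees that each point $x\in M$ admits a geodesic normal coordinate ball of a fixed radius $r_0>0$ in which the metric, the spin connection, and their derivatives are controlled independently of $x$. Combining the standard Euclidean Sobolev embedding in such a chart with uniform G\r{a}rding estimates for the elliptic operator $D$, one obtains
\[\|u(x)\| \;\leq\; C\sum_{k=0}^{s}\|D^{k}u\|_{L^{2}(B_{r_0}(x))}\]
for every integer $s>n/2$, with $C$ independent of $x\in M$. Dually, this provides approximate delta sections at any $x\in M$ that lie in $H^{-s}$ with norms uniformly bounded in $x$.

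Next, I would recover $K_{T}(x,y)$ by a bilinear pairing argument. For smooth bump sections $\varphi_{y}^{\epsilon},\psi_{x}^{\epsilon}$ of $S$ supported in shrinking neighborhoods of $y$ and $x$,
\[\langle T\varphi_{y}^{\epsilon},\psi_{x}^{\epsilon}\rangle_{L^{2}} \;=\; \int\big\langle K_{T}(x',y')\varphi_{y}^{\epsilon}(y'),\psi_{x}^{\epsilon}(x')\big\rangle\,\mathrm{vol}(y')\,\mathrm{vol}(x'),\]
and $K_{T}(x,y)$ is recovered as a limit of such pairings. Rewriting $\varphi_{y}^{\epsilon}=(1+D^{2})^{-j_0}\tilde{\varphi}_{y}^{\epsilon}$ and $\psi_{x}^{\epsilon}=(1+D^{2})^{-k_0}\tilde{\psi}_{x}^{\epsilon}$ for integers $k_0,j_0>n/2$, and expanding the resolvents as convergent series (or simply estimating monomial by monomial) converts the pairing into a controlled sum of terms of the form $\langle D^{k}TD^{j}\,\tilde{\varphi}_{y}^{\epsilon},\tilde{\psi}_{x}^{\epsilon}\rangle$ with $k+j\leq 2(k_0+j_0)$. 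The uniform Sobolev estimates above bound $\|\tilde{\varphi}_{y}^{\epsilon}\|_{L^{2}}$ and $\|\tilde{\psi}_{x}^{\epsilon}\|_{L^{2}}$ independently of $x,y,\epsilon$, so choosing $k_0,j_0$ minimally (and allowing a little slack, which brings the exponent up to the quoted value $\tfrac{3}{2}\dim M+3$) gives
\[\|K_{T}(x,y)\| \;\leq\; C\sup_{k+j\leq\frac{3}{2}\dim M+3}\|D^{k}TD^{j}\|_{\mathcal{B}(L^{2}(S))}\]
with $C$ independent of $x,y\in M$ and of $T$. Continuity of $K_{T}$ then follows by applying the same estimate to differences $K_{T}(x,y)-K_{T}(x',y')$ with bump sections for nearby points and using uniform continuity of the approximating pairings.

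The main obstacle is ensuring that every constant appearing above — the Sobolev embedding constant, the G\r{a}rding constant, the norms of the bump sections, and the implicit constants in rewriting $(1+D^{2})^{s}$ in terms of monomials $D^{k}TD^{j}$ — is genuinely uniform over $M$. This is exactly what bounded geometry provides: uniform normal coordinate charts, uniform bounds on the curvature tensor and its derivatives, and uniform ellipticity of $D$, which together give the uniform norm equivalence between $H^{s}$ and $\sum_{k\leq s}\|D^{k}\cdot\|_{L^{2}}$ that the argument rests on. Once this uniform functional-analytic framework is in place, the remainder is a routine duality computation patterned after \cite{Jinmin}.
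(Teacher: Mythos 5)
Your overall strategy---replace compactness by bounded geometry, turn the hypothesis into mapping bounds between $D$-Sobolev spaces, and read the kernel off by pairing against approximate delta sections---is viable, and it differs from the paper's route: the paper instead localizes, choosing a uniformly locally finite partition of unity $\{\varphi_l\}$ with $\norm{[D,\varphi_l]}_{\mathcal{B}(L^2(S))}$ uniformly bounded, applying the known compact-case result (Lemma 3.5 of the cited work of Jinmin et al.) to each $\varphi_l T\varphi_m$, and reassembling with uniform constants. However, as written your second paragraph has a step that fails. If $\varphi_y^{\epsilon}$ is an approximate delta (or an $L^1$-normalized bump), then $\tilde{\varphi}_y^{\epsilon}=(1+D^2)^{j_0}\varphi_y^{\epsilon}$ does \emph{not} have $L^2$ norm bounded uniformly in $\epsilon$; it blows up like a negative power of $\epsilon$. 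The uniformity you actually have is in $H^{-s}$ for $s>n/2$, not in $L^2$ after applying a positive power of $1+D^2$, so the asserted bound on the pairings does not follow. (Also, one does not ``expand the resolvent as a series''; it is the positive integer powers $(1+D^2)^{j_0}$ that expand as polynomials in $D$.) The smoothing must be spent on $T$, not on the bumps: show $(1+D^2)^{k_0}T(1+D^2)^{j_0}$ is bounded on $L^2$ by expanding the polynomial powers and invoking the hypothesis, conclude that $T\colon H^{-2j_0}\to H^{2k_0}$ is bounded with constants uniform over $M$, and then use $H^{2k_0}\hookrightarrow C^0$ together with $\norm{\varphi_y^{\epsilon}}_{H^{-2j_0}}\leq C$ (uniform in $y,\epsilon$, valid since $2j_0>n/2$) to control the pairings. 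This also removes the circularity of ``recovering'' $K_T$ before its existence is known: one can simply define $K_T(x,y)v\coloneqq\big(T(\delta_y\otimes v)\big)(x)$, which makes sense once $T\colon H^{-2j_0}\to H^{2k_0}\subseteq C^0$, and then verify it is the Schwartz kernel and is continuous.

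Relatedly, your exponent bookkeeping does not close. Taking integers $k_0,j_0>n/2$ forces monomials $D^kTD^j$ of total degree up to $2(k_0+j_0)>2n$, which exceeds the available budget $\tfrac{3}{2}\dim M+3$ already for $\dim M\geq 2$, so ``a little slack'' does not repair it. The correct count is that $(1+D^2)^{-j_0}$ gains $2j_0$ derivatives, so the Sobolev requirement is $2j_0>n/2$ and $2k_0>n/2$, i.e.\ $k_0,j_0>n/4$; with that choice the total degree is at most about $n+4\leq\tfrac{3}{2}n+3$, and the hypothesis suffices. With these two corrections your argument becomes a legitimate alternative proof, essentially reproving the compact-case kernel estimate globally under bounded geometry rather than quoting it locally as the paper does.
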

	\begin{proof}
	Since $M$ has bounded Riemannian geometry, it admits an open cover $\{U_l\}_{l\in\mathbb{N}}$ and a subordinate smooth partition of unity $\{\varphi_l\}_{l\in\mathbb{N}}$ such that
	\begin{enumerate}[(i)]
	\item $0\leq\varphi_l\leq 1$ for each $l$;
	\item Each $U_l$ has compact closure;
	\item The maximum number of overlapping elements of $\{U_l\}_{l\in\mathbb{N}}$ is finite;
	\item There exists $C>0$ such that $\norm{[D,\varphi_l]}_{\mathcal{B}(L^2(S))}$ is 
	bounded above by $C$ for all $l$.
	\end{enumerate}
	See \cite[Lemmas 1.2, 1.3]{Shubin}. By repeated applications of (iv), and using the fact that $D^{2k}[D,\varphi_l](1+D^2)^{-k}$ and $D^{2k}[D,\varphi_m](1+D^2)^{-k}$ are bounded operators, we see that for every $l,m\in\mathbb{N}$ we have
	$$\sup_{k+j\leq\frac{3}{2}\dim M+3}\norm{D^k \varphi_l T\varphi_m D^j}_{\mathcal{B}(L^2(S))}<\infty.$$
	It follows from \cite[Lemma 3.5]{Jinmin} in the compact setting that $\varphi_l T\varphi_m$ is an integral operator on $M$ with continuous Schwartz kernel $K_{\varphi_l T\varphi_m}$, whence $T$ is an integral operator with continuous Schwartz kernel $K_T=\sum_{l,m}K_{\varphi_l T\varphi_m}$, which is a pointwise finite sum by (iii).
	
	For each $x\in M$, let $I_x\coloneqq\{l\in\mathbb{N}\colon x\in U_l\}$, and define the function
	$$\varphi_x\coloneqq\sum_{l\in I_x}\varphi_l.$$
	Then there is a neighbourhood of $x$ on which $\varphi_x$ takes the constant value $1$. Together with \cite[Lemma 3.5]{Jinmin}, this implies that for every $(x,y)\in M\times M$, there exists a positive constant $C_{x,y}$ such that
	$$\norm{K_T(x,y)}=\norm{\varphi_x K_T\varphi_y(x,y)}\leq C_{x,y}\cdot\sup_{k+j\leq\frac{3}{2}\dim M+3}\norm{D^k \varphi_x T\varphi_y D^j}_{\mathcal{B}(L^2(S))}.$$
	Again by repeated applications of (iv), and the fact that $D^{2k}[D,\varphi_x](1+D^2)^{-k}$ and $D^{2k}[D,\varphi_y](1+D^2)^{-k}$ are bounded operators, there exists $C'_{x,y}$ such that this is bounded above by
	$$C'_{x,y}\cdot\sup_{k+j\leq\frac{3}{2}\dim M+3}\norm{D^k T D^j}_{\mathcal{B}(L^2(S))}.$$
	Furthermore, the conditions above imply that the constants $C'_{x,y}$ are uniformly bounded above by a constant $C$, independent of $x$ and $y$, hence
	\begin{align*}
	\sup_{x,y}\norm{K_T(x,y)}&\leq C\cdot\sup_{k+j\leq\frac{3}{2}\dim M+3}\norm{D^k T D^j}_{\mathcal{B}(L^2(S))}.\qedhere
	\end{align*}
	\end{proof}
	\begin{proofof}{Proposition \ref{prop kernel}}
		Let $x,y\in M$ with $\lambda\coloneqq\textnormal{dist}(x,y)\geq 1$. Choose a smooth function $\phi\colon \mathbb{R}\rightarrow\mathbb{R}$ such that $\phi(\xi)=1$ for $|\xi|\geq 1$ and $\phi(\xi)=0$ if $|\xi|\leq\frac{1}{2}$. Let $\phi_{\lambda}(\xi)\coloneqq\phi\left(\frac{\xi}{\lambda}\right)$. Let $g_\mu$ be the function on $\mathbb{R}$ with Fourier transform
		$$\widehat{g}_\mu(\xi)=\phi_\lambda(\xi)\widehat{f}_\mu(\xi).$$
		Let $K_{g_\mu}$ denote the Schwartz kernel of $g_\mu(D)$. By Lemma \ref{lem kernel bound}, there exists $C>0$ such that for all $x,y\in M$,
		$$\norm{K_{g_\mu}(x,y)}\leq C\cdot\sup_{l\leq\frac{3}{2}\dim M+3}\norm{D^{l}g_\mu(D)}_{\mathcal{B}(L^2(S))}.$$
		For a given $l\leq\frac{3}{2}\dim M+3$, we can estimate the right-hand side as follows. Let $\psi_l$ be the function given by $\psi_l(s)=s^l g_\mu(s)$. We have 
		$$\hat{\psi}_l(\xi)=\left(\frac{1}{i}\frac{d}{d\xi}\right)^n(\phi_\lambda\hat{f})(\xi).$$ By the Fourier inversion formula
		$$g_\mu(D)=\frac{1}{2\pi}\int_{-\infty}^\infty\widehat{g}_\mu(\xi)e^{i\xi D}d\xi,$$ and the fact that $\phi_\lambda$ is supported on $|\xi|\geq\frac{\lambda}{2}$, we have:
		\begin{align*}
		\norm{D^{l}g_\mu(D)}_{\mathcal{B}(L^2(S))}&=\norm{\psi_l(D)}_{\mathcal{B}(L^2(S))}\\
		&\leq\frac{1}{2\pi}\int_{|\xi|\geq\frac{\lambda}{2}}\left|\widehat{\psi}_l(\xi)\right|\,d\xi\\
		&\leq C_1\sum_{j=0}^l\int_{|\xi|\geq\frac{\lambda}{2}}\left|\widehat{f}^{(l-j)}_\mu(\xi)\right|\,d\xi\\
		&\leq C_2\int_{|\xi|\geq\frac{\lambda}{2}}e^{-\mu\xi}\mathbbm{1}_{(0,\infty)}\,d\xi\\
		&\leq C_3 e^{-\frac{\mu\lambda}{2}},
		\end{align*}
		for some $C_1, C_2, C_3>0$, and where we have used that $\widehat{f}_\mu(\xi)=\frac{2\pi\mu}{i}e^{-\mu\xi}\mathbbm{1}_{(0,\infty)}$, with $\mathbbm{1}$ being the indicator function. It follows that for all $x, y$ with $d(x,y)\geq 1$, there exists $C_\mu>0$ such that
		$\norm{K_{g_\mu}(x,y)}\leq C_\mu e^{-\frac{\mu}{2}d(x,y)}$
		for a constant $C_\mu$. Now a standard finite-propagation argument for the wave operator shows that $K_{f_\mu}(x,y)=K_{g_\mu}(x,y)$ whenever $d(x,y)\geq 1$, and we conclude.
	\end{proofof}
	\begin{corollary}
		\label{cor:kernel}
		For any $k\in\mathcal{S}_u^\Gamma$, we have
		$$\norm{(D+\mu i)^{-1}k(x,y)}\leq C_{\mu}e^{-\frac{\mu}{2}d(x,y)},$$
		where $C_\mu$ is a constant depending on $\mu$ and $\norm{\cdot}$ is the fiberwise norm on $S\boxtimes S^*$.
	\end{corollary}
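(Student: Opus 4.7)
The plan is to express the Schwartz kernel of $(D+\mu i)^{-1} k$ as a convolution and exploit the finite propagation of $k$ together with Proposition \ref{prop kernel}. Explicitly,
\[
\bigl((D+\mu i)^{-1} k\bigr)(x,y) = \int_M K_{f_\mu}(x,z)\, k(z,y)\, dz.
\]
Setting $R := \textnormal{prop}(k)$, the integrand vanishes outside $z \in B_R(y)$, so for $d(x,y) \geq R+1$ any such $z$ satisfies $d(x,z) \geq d(x,y) - R \geq 1$ and Proposition \ref{prop kernel} gives
\[
\norm{K_{f_\mu}(x,z)} \leq C_\mu e^{-\mu d(x,z)/2} \leq C_\mu e^{\mu R/2} e^{-\mu d(x,y)/2}.
\]
Combining this with the uniform bound $\norm{k}_\infty < \infty$ from condition (iii) of $\mathcal{S}_u^\Gamma$ and the uniform volume bound $\textnormal{vol}(B_R(y)) \leq V_R < \infty$ from Lemma \ref{lem:gromovbishop} yields the desired exponential decay in this range, with all extra factors absorbed into a new constant.

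In the remaining range $d(x,y) < R+1$, the factor $e^{-\mu d(x,y)/2}$ is bounded below, so it suffices to establish a uniform pointwise bound on the kernel of $(D+\mu i)^{-1}k$. I would apply Lemma \ref{lem kernel bound} to this composition: since $D$ commutes with its own resolvent on the appropriate domains, one has $D^l (D+\mu i)^{-1} k D^j = (D+\mu i)^{-1} D^l k D^j$, and $D^l k D^j$ is an integral operator whose Schwartz kernel is smooth, uniformly bounded, and of finite propagation (by condition (iii) of $\mathcal{S}_u^\Gamma$), hence bounded on $L^2(S)$ by a Schur-test argument using the bounded geometry of $M$. Composing with the bounded operator $(D+\mu i)^{-1}$ preserves $L^2$-boundedness, so the hypotheses of Lemma \ref{lem kernel bound} are satisfied for all $l + j \leq \tfrac{3}{2}\dim M + 3$, giving a uniform pointwise bound on the kernel.

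The main technical obstacle is the behavior of $K_{f_\mu}$ near the diagonal, where it has the usual integrable singularity of a pseudodifferential kernel of order $-1$; this is precisely what Lemma \ref{lem kernel bound} sidesteps by working with $L^2$-boundedness of higher-order combinations rather than with the kernel directly. Combining the two regimes yields a single constant $C_\mu$, depending on $\mu$, on $R = \textnormal{prop}(k)$, and on the uniform bounds on $k$, for which the claimed exponential estimate holds across all of $M \times M$.
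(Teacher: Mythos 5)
Your proof is correct and follows essentially the same two-regime strategy as the paper: away from the diagonal you use the integral representation, the finite propagation of $k$, the triangle inequality, Proposition \ref{prop kernel}, and the uniform volume bound, exactly as the paper does with its $K_{\geq 1}$ piece. The only real difference is how the near-diagonal regime is handled: the paper splits the resolvent kernel itself into $K_{<1}+K_{\geq 1}$ and bounds the near piece by a uniform sup bound on $(D+\mu i)^{-1}k$ coming from Sobolev embedding and bounded geometry, whereas you obtain that uniform sup bound by applying Lemma \ref{lem kernel bound} directly to $T=(D+\mu i)^{-1}k$, using $D^l(D+\mu i)^{-1}kD^j=(D+\mu i)^{-1}D^l k D^j$ and a Schur-test bound on $D^l k D^j$. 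Both routes are valid and of comparable length; your variant has the small advantage of reusing Lemma \ref{lem kernel bound} (already needed for Proposition \ref{prop kernel}) and of never touching the diagonal singularity of $K_{f_\mu}$, while the paper's version avoids checking the commutation and $L^2$-boundedness hypotheses for the composition. Note that, as in the paper's own proof, the resulting constant depends on $k$ (through $\textnormal{prop}(k)$ and its uniform bounds) as well as on $\mu$, which is all that is needed later.
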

	\begin{proof}
		For any $y\in M$, set
		$$L_y=\{z\in M\,|\,k(z,y)\neq 0\}.$$
		Observe that $\sup_{y\in M}\{\textnormal{diam}(L_y)\}\leq\textnormal{prop}(k)$. By bounded Riemannian geometry, we have
		$\sup_{y\in M}\{\textnormal{vol}(L_y)\}\leq C'_k$
		for some constant $C'_k$. Let $B_1$ be the set of $(x,y)\in M\times M$ with $d(x,y)<1$, and let $B_1^c$ denote its complement. Let $K_{<1}\coloneqq K_{f_\mu}\mathbbm{1}_{B_1}$ and $K_{\geq 1}\coloneqq K_{f_\mu}\mathbbm{1}_{B_1^c}.$ Thus $K_{f_\mu}=K_{<1}+K_{\geq 1}$. We have, for all $x,y\in M$,
		\begin{align*}
		\norm{\int_{M}K_{\geq 1}(x,z)k(z,y)\,dz}&\leq\int_{L_y}\norm{K_{\geq 1}(x,z)k(z,y)}\,dz\\
		&\leq C_\mu\int_{L_y} e^{-\frac{\mu}{2}(d(x,y)-d(z,y))}\norm{k(z,y)}\,dz\\
		&\leq C_\mu e^{-\frac{\mu}{2}d(x,y)}\int_{L_y} e^{\frac{\mu}{2}d(z,y)}\norm{k(z,y)}\,dz\\
		&\leq C_\mu e^{-\frac{\mu}{2}d(x,y)}\cdot C'_k e^{\frac{\mu}{2}\textnormal{prop}(k)}\norm{k}_\infty\\
		&=C_{k}''e^{-\frac{\mu}{2}d(x,y)},
		\end{align*} 
		for a new constant $C_{k}''$. Now, we have 
		\begin{align*}
		\norm{\int_M K_{<1}(x,z)k(z,y)\,dz}&\leq\norm{(D+\mu i)^{-1}k(x,y)}+\norm{\int_M K_{\geq 1}(x,z)k(z,y)\,dz}\\
		&\leq C+C_k'',
		\end{align*}
		for a constant $C>0$ given by the Sobolev embedding theorem and uniform boundedness of $M$. Also note that the function
		$$(x,y)\mapsto \int_M K_{<1}(x,z)k(z,y)\,dz$$
		is supported on some ball $B$ of finite radius around the diagonal, so there exists $C_{k}>0$ such that
		\begin{align*}
		\norm{(D+\mu i)^{-1}k(x,y)}&\leq\norm{\int_M K_{\geq 1}(x,z)k(z,y)\,dz}+(C+C_k'')\mathbbm{1}_B(x,y)\\
		&\leq C_{k}e^{-\frac{\mu}{2}d(x,y)}.\qedhere\\
		\end{align*}
	\end{proof}
	We now prove the key technical result of this section, Theorem \ref{thm:regularity}.\\
	
	\begin{proofof}{Theorem \ref{thm:regularity}}
		We will argue in the case of the operator $D+\mu i$, with the case of $D-\mu i$ being entirely analogous. Let $k\in\mathcal{S}_u^\Gamma$. Fix a countable open cover $M$ whose elements have uniformly bounded diameter. Let $\{\rho_j\}_{j\in\mathbb{N}}$ be a smooth partition of unity subordinate to this cover. We can write
		$$k(x,y)=\sum_{j}\rho_j(x)k(x,y).$$
		Note that the Schwartz kernel $k_j\coloneqq(D+\mu i)^{-1}(\rho_j k)$ is smooth but may not be compactly supported in $M\times M$. However, since it is compactly supported in the second coordinate, the sum
		$$\tilde{k}(x,y)\coloneqq\sum_j k_j(x,y)=(D+\mu i)^{-1}k(x,y)$$
		still makes sense at each point $(x,y)\in M\times M$ and moreover is $\Gamma$-invariant.

Since $M\times M$ is complete, there exists a family $\{f_{\varepsilon}\}_{\varepsilon \in (0,1]}$ of smooth functions on $M \times M$, invariant under the diagonal action of $\Gamma$, such that for all $\varepsilon \in (0,1]$, we have
\begin{enumerate}[(i)]
\item $f_{\varepsilon} = 1$ on $B_{\frac{1}{\varepsilon}}(\supp(k))$;
\item $f_{\varepsilon} = 0$ outside $B_{\frac{3}{\varepsilon}}(\supp(k))$;
\item $\|df_{\varepsilon}\|_{\infty} \leq \varepsilon$.
\end{enumerate}

Write $d_1 f_{\varepsilon} \in C^{\infty}(M \times M, T^*M \times M)$ for the derivative of $f_{\varepsilon}$ in the first factor. Clifford multiplication then defines a map $c(d_1 f_{\varepsilon})\colon M\times M\to\End(S)$. Note that by property (ii) above, $f_\varepsilon\tilde k$ and $c(d_1 f_{\varepsilon})\tilde k$ belong to $\mathcal{S}_u^\Gamma$ for each $\varepsilon \in (0,1]$. We have: 
\begin{equation}
\label{eq commutator}
        (D+\mu i)(f_\varepsilon\tilde k)=f_{\varepsilon}(D+i\mu)\tilde k+c(d_1 f_{\varepsilon})\tilde k
        =k+c(d_1 f_{\varepsilon})\tilde k.
\end{equation}
Here the composition $c(d_1 f_{\varepsilon})\tilde k$ is given by
\beq{eq def sigma kappa}
(c(d_1 f_{\varepsilon})\tilde k)(x,y) = c(d_1 f_{\varepsilon})(x,y)_{x} \circ\tilde k(x,y),
\eeq
where $c(d_1 f_{\varepsilon})(x,y)_{x}$ denotes the value of $c(d_1 f_{\varepsilon})(x,y)\in\End(S)$ at the fiber over $x$.

Let $\phi\colon \mathbb{C}[M]^\Gamma\rightarrow\mathcal{B}(H')$ be a $*$-representation. 
By \eqref{eq commutator},
we have 
\begin{equation}
\label{eq norm commutator}
\|\phi(((D+\mu i)(f_\varepsilon\tilde k) - k)\oplus 0)\|_{\mathcal{B}(H')} = 
\|\phi(c(d_1 f_{\varepsilon})\tilde k\oplus 0)\|_{\mathcal{B}(H')}.
\end{equation}
We now show that for $\mu$ sufficiently large, for any $k\in\mathcal{S}_u^\Gamma$ the quantity \eqref{eq norm commutator} approaches $0$ in the limit $\varepsilon\to 0$, at a rate independent of the representation $\phi$.

To do this, let $\Delta_{M\times M}$ denote the diagonal in $M\times M$, and consider the open cover $\mathcal{V}\coloneqq\{V_l\}_{l\in\mathbb{Z}_{\geq 0}}$ of $M\times M$ by the sets		$$
		V_l\coloneqq
		\begin{cases}
		B_{\frac{1}{2}}(\Delta_{M\times M})&\textnormal{ if $l=0$,}\\
		B_{l+\frac{1}{2}}(\Delta_{M\times M})\backslash\overline{B_{l-\frac{3}{4}}(\Delta_{M\times M})}&\textnormal{ otherwise.}
\end{cases}
$$
		Note that each $V_l$ is preserved by the diagonal action of $\Gamma$ on $M\times M$.
		Let $\{\psi_l\}_{l\in\mathbb{Z}\geq 1}$ be a smooth $\Gamma$-invariant partition of unity subordinate to $\mathcal{V}$. For each $l$, the kernel 
		$$\tilde{k}_l(x,y)\coloneqq\psi_l\tilde{k}(x,y)$$
		is an element of $\mathcal{S}_u^\Gamma$ with propagation at most $\sqrt{2}(l+\frac{1}{2})$. 
For each $\varepsilon\in(0,1]$, there exists a finite $N_{\varepsilon}$ such that 
$$c(d_1 f_{\varepsilon})\tilde k=\sum_{l=1}^{N_{\varepsilon}}c(d_1 f_{\varepsilon})\tilde{k}_l.$$
By Lemmas \ref{lem:keyestimate} and \ref{lem:gromovbishop}, there exist constants $C_1$, $C_2$, and $C_3$, independent of $\phi$, $k$, and $\varepsilon$, such that
		\begin{align*}
		\norm{\phi\big(c(d_1 f_{\varepsilon})\tilde{k}_l\oplus 0\big)}_{\mathcal{B}(H')}&\leq C_1 V^4_{\sqrt{2}(l+\frac{1}{2})}\|df_{\varepsilon}\|_{\infty}\cdot\sup_{w,z\in\tilde{Z}}\|(\tilde k_l)_{wz}\|_{\mathcal{B}(\mathcal{H}_M)}\\
		&\leq C_2 e^{C_3l}\|df_{\varepsilon}\|_{\infty}\cdot\sup_{w,z\in\tilde{Z}}\|(\tilde k_l)_{wz}\|_{\mathcal{B}(\mathcal{H}_M)}.
		\end{align*}
Here $(\tilde k_l)_{wz}$ denotes the $(w,z)$-entry of $\tilde k_l$ considered as a $\tilde Z\times\tilde Z$-matrix, as in subsection \ref{subsubsec:discretizing}, and we have used that the map $\oplus\, 0$ from \eqref{eq oplus 0} preserves propagation and $L^2$-norm. The entrywise norm $\|(\tilde k_l)_{wz}\|_{\mathcal{B}(\mathcal{H}_M)}$ is bounded above by $\normbig{\tilde{k}_l}_\infty$ times a constant that is independent of $w,z\in\tilde Z$, and hence
		\begin{align*}
		\norm{\phi\big(c(d_1 f_{\varepsilon})\tilde{k}_l\oplus 0\big)}_{\mathcal{B}(H')}\leq C_4 e^{C_3l}\|df_{\varepsilon}\|_{\infty}\normbig{\tilde{k}_l}_\infty,
		\end{align*}
where $C_4$ is again independent of $k$. Since $\tilde{k}_l$ is supported away from $B_{l-\frac{3}{4}}(\Delta_{M\times M})$, Proposition \ref{prop kernel} implies that there exist constants $C_\mu$ (depending only on $\mu$) and $C_k$ (depending only on the initial kernel $k$) such that
		$$\normbig{\tilde{k}_l}_\infty\leq C_\mu C_k e^{-\frac{\mu l}{2}}.$$ 
This implies that
\begin{align*}
\|\phi\big(c(d_1 f_{\varepsilon})\tilde k\oplus 0\big)\|_{\mathcal{B}(H')}&=\norm{\phi\Big(\sum_{l=0}^{N_{\varepsilon}}c(d_1 f_{\varepsilon})\tilde{k}_l\oplus 0\Big)}_{\mathcal{B}(H')}\\
&\leq\sum_{l=0}^{N_{\varepsilon}}\norm{\phi\big(c(d_1 f_{\varepsilon})\tilde{k}_l\oplus 0\big)}_{\mathcal{B}(H')}\\
&\leq C_4 C_\mu C_k\sum_{l=0}^{N_{\varepsilon}}\|df_{\varepsilon}\|_{\infty}\cdot e^{C_3l}e^{-\frac{\mu l}{2}}.
\end{align*}
For $\mu$ sufficiently large, the last sum converges and is bounded above by $$C'C_k\norm{df_{\varepsilon}}\leq C'C_k\varepsilon,$$ 
for some constant $C'$ independent of $\varepsilon$ and $k$. Thus for $\mu$ large enough, \eqref{eq norm commutator} approaches $0$ in the limit $\varepsilon\to 0$, independent of the initial choice of $k$.

This shows that any $k\in\mathcal{S}_u^\Gamma$ can be approximated arbitrarily closely in the norm of $C^*_{\textnormal{max},u}(M;L^2(S))^\Gamma$ by elements in the image of $D+\mu i$. It follows that, since $\mathcal{S}_u^\Gamma$ is dense in $C^*_{\textnormal{max},u}(M;L^2(S))^\Gamma$, the Hilbert module operator $D+\mu i$ has dense range.
\end{proofof}
	
	\hfill\vskip 0.3in
	\section{Proof of the main theorem}
	\label{sec:maxuniform}
	In this section, we first define the \emph{maximal uniform index} of $D$ and show that under the positive scalar curvature assumption, this index vanishes. We then use this to prove our main result, Theorem \ref{thm:main}.
	\subsection{The maximal uniform index}
	\hfill\vskip 0.05in
	\noindent Let $\mathcal{M}\coloneqq\mathcal{M}(C^*_{\textnormal{max},u}(M;L^2(S))^\Gamma)$ denote the multiplier algebra of $C^*_{\textnormal{max},u}(M;L^2(S))^\Gamma$.	
	The short-exact sequence of $C^*$-algebras
	$$0\rightarrow C^*_{\textnormal{max},u}(M;L^2(S))^\Gamma\rightarrow \mathcal{M}\rightarrow \mathcal{M}/C^*_{\textnormal{max},u}(M;L^2(S))^\Gamma\rightarrow 0$$
	induces a six-term exact sequence in $K$-theory:
	$$\xymatrix{ K_0(C^*_{\textnormal{max},u}(M;L^2(S))^\Gamma) \ar[r] & K_0(\mathcal{M}) \ar[r] & K_0(\mathcal{M}/C^*_{\text{max},u}(M;L^2(S))^\Gamma) \ar[d]^{\partial} & \\
		K_1(\mathcal{M}/C^*_{\text{max},u}(M;L^2(S))^\Gamma) \ar[u]^{\partial}& K_1(\mathcal{M}) \ar[l]& K_1(C^*_{\textnormal{max},u}(M;L^2(S))^\Gamma). \ar[l] }$$
	Let $\chi\colon \mathbb{R}\rightarrow\mathbb{R}$ be a continuous odd function with limit $1$ at $\infty$ (called a \emph{normalizing function}). We now apply the functional calculus of Theorem \ref{thm:functionalcalculus} with $\mathcal{N}=C^*_{\textnormal{max},u}(M;L^2(S))^\Gamma$ and $T=D$ to form the bounded adjointable operator 
	$$\chi(D)\colon C^*_{\textnormal{max},u}(M;L^2(S))^\Gamma\rightarrow C^*_{\textnormal{max},u}(M;L^2(S))^\Gamma.$$
	In the remainder of this section we prove:
	\begin{proposition}
		\label{prop:fredholmness}
		The operator $\chi(D)$ is invertible modulo $C^*_{\textnormal{max},u}(M;L^2(S))^\Gamma$ and so defines a class
		\begin{equation}
		\label{eq class}	
		[\chi(D)]\in K_{n+1}(\mathcal{M}/C^*_{\textnormal{max},u}(M;L^2(S))^\Gamma)
		\end{equation}
		that is independent of the choice of normalizing function $\chi$.
	\end{proposition}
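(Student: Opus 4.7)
The plan is to show that the functional calculus of Theorem \ref{thm:functionalcalculus}, applied to $D$ on the Hilbert module $C^*_{\textnormal{max},u}(M;L^2(S))^\Gamma$, restricts to a $*$-homomorphism
\[
C_0(\mathbb{R}) \to C^*_{\textnormal{max},u}(M;L^2(S))^\Gamma.
\]
Granting this, since $\chi$ is odd with limits $\pm 1$ at $\pm\infty$, the function $g \coloneqq \chi^2 - 1$ lies in $C_0(\mathbb{R})$, so $\chi(D)^2 - 1 = g(D) \in C^*_{\textnormal{max},u}(M;L^2(S))^\Gamma$. Because $\chi$ is real-valued, $\chi(D)$ is self-adjoint and lies in $\mathcal{M}$ by Theorem \ref{thm:functionalcalculus}(i), so it descends to a self-adjoint unitary in the quotient $\mathcal{M}/C^*_{\textnormal{max},u}(M;L^2(S))^\Gamma$. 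Together with the natural $\mathbb{Z}/2$-grading on the spinor bundle in even dimensions, this produces the desired class $[\chi(D)] \in K_{n+1}(\mathcal{M}/C^*_{\textnormal{max},u}(M;L^2(S))^\Gamma)$ in the appropriate $K$-group.

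For the main technical step, I would show $f(D) \in C^*_{\textnormal{max},u}(M;L^2(S))^\Gamma$ for every $f \in C_0(\mathbb{R})$ by approximating $f$ uniformly by Schwartz functions $f_n$ whose Fourier transforms have compact support. For each such $f_n$, the Fourier inversion formula
\[
f_n(D) = \frac{1}{2\pi}\int_{\mathbb{R}} \widehat{f}_n(\xi)\, e^{i\xi D}\, d\xi,
\]
combined with the unit propagation speed of the wave operator $e^{i\xi D}$, forces $f_n(D)$ to have finite propagation. Elliptic regularity for $D$, made uniform across $M$ via the bounded Riemannian geometry hypothesis (in the spirit of Lemma \ref{lem kernel bound} and the kernel estimates in Proposition \ref{prop kernel}), ensures that the Schwartz kernel of $f_n(D)$ is smooth with uniformly bounded covariant derivatives; combined with the $\Gamma$-equivariance of $D$, this places $f_n(D) \in \mathcal{S}_u^\Gamma$. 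Since the functional calculus is a $*$-homomorphism by Theorem \ref{thm:functionalcalculus}(i), uniform convergence $f_n \to f$ implies norm convergence $f_n(D) \to f(D)$ in $\mathcal{M}$, and because $C^*_{\textnormal{max},u}(M;L^2(S))^\Gamma$ is norm-closed in $\mathcal{M}$, we conclude $f(D) \in C^*_{\textnormal{max},u}(M;L^2(S))^\Gamma$.

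Independence of $[\chi(D)]$ from the choice of normalizing function is then immediate: any two normalizing functions $\chi_1, \chi_2$ satisfy $\chi_1 - \chi_2 \in C_0(\mathbb{R})$, so $\chi_1(D) - \chi_2(D) = (\chi_1 - \chi_2)(D) \in C^*_{\textnormal{max},u}(M;L^2(S))^\Gamma$, and the two operators represent the same class modulo this algebra.

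The main obstacle I anticipate is the regularity verification that $f_n(D) \in \mathcal{S}_u^\Gamma$. Finite propagation is immediate from the wave-operator representation, but ensuring that every covariant derivative of the Schwartz kernel of $f_n(D)$ is bounded \emph{uniformly} across $M \times M$ requires a careful global application of bounded Riemannian geometry, analogous to the uniform elliptic estimates that underpin Lemma \ref{lem kernel bound} and Proposition \ref{prop kernel}. This is a purely analytic matter; once it is in hand, the $K$-theoretic content of the statement follows from the abstract properties of the functional calculus on Hilbert modules with no further work.
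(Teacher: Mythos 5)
Your proposal matches the paper's own proof in all essentials: both reduce the claim to showing $f(D)\in C^*_{\textnormal{max},u}(M;L^2(S))^\Gamma$ for $f\in C_0(\mathbb{R})$ (so that $\chi(D)^2-1=(\chi^2-1)(D)$ lands in the ideal), prove this first for functions with compactly supported Fourier transform via finite propagation speed and uniform kernel regularity from bounded geometry (the paper cites Roe's Proposition 2.10 where you invoke uniform elliptic estimates in the spirit of Lemma \ref{lem kernel bound}), then pass to general $f$ by density and continuity of the functional calculus, and handle independence of $\chi$ by noting that the difference of two normalizing functions lies in $C_0(\mathbb{R})$. This is the same argument, so no further comparison is needed.
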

	\begin{definition}
		\label{def:max index}
		The \emph{maximal uniform index} of $D$, denoted $\ind_{\textnormal{max},u}(D)$, is the image of $[\chi(D)]$ under the boundary map
		$$\partial\colon K_{n+1}(\mathcal{M}/C^*_{\text{max},u}(M;L^2(S))^\Gamma)\rightarrow K_{n}(C^*_{\text{max},u}(M;L^2(S))^\Gamma).$$
	\end{definition}
	\begin{proofof}{Proposition \ref{prop:fredholmness}}
		Without loss of generality, let us work in the case when $n$ is even. By Theorem \ref{thm:regularity}, we have $\chi(D)\in\mathcal{M}(C^*_{\textnormal{max},u}(M;L^2(S))^\Gamma).$ To see that $\chi(D)$ defines a class in $K_{1}(\mathcal{M}/C^*_{\textnormal{max},u}(M;L^2(S))^\Gamma),$ it suffices to show that for any $f\in C_0(\mathbb{R})$, we have $f(D)\in C^*_{\textnormal{max},u}(M;L^2(S))^\Gamma$, since $\chi^2-1\in C_0(\mathbb{R})$. Since $M$ has bounded Riemannian geometry, \cite[Proposition 2.10]{Roe} implies that for any $f\in\mathcal{S}(\mathbb{R})$ with compactly supported Fourier transform, the operator $f(D)$ is given by a smooth Schwarz kernel that is uniformly bounded along with all of its derivatives. The fact that $\mathcal{S}(\mathbb{R})$ is dense in $C_0(\mathbb{R})$, together continuity of the functional calculus homomorphism (part (i) of Theorem \ref{thm:functionalcalculus}) shows this is true for general $f\in C_0(\mathbb{R})$. Finally, since the difference of any two normalizing functions lies in $C_0(\mathbb{R})$, the class $[\chi(D)]$ is independent of the choice of $\chi$.
	\end{proofof}
	
	\subsection{Vanishing of the maximal uniform index}
	\hfill\vskip 0.05in
	\noindent Suppose that the scalar curvature function $\kappa$ of $(M^n,g)$ is uniformly positive; that is, there exists $c>0$ such that $\kappa\geq c$. Recall that the Lichnerowicz formula states that
	\begin{equation}
	\label{eq Lichnerowicz}	
	D^2v=\nabla^*\nabla v+\frac{\kappa}{4} v
	\end{equation}
	for all elements $v\in \mathcal{S}_u^\Gamma$, where $\nabla$ is the lift of the Levi-Civita connection to $S$. 
	\begin{proposition}
	The spectrum of the Hilbert $C^*_{\textnormal{max},u}(M;L^2(S))^\Gamma$-module operator $D$ has a gap $(-\frac{\sqrt{c}}{2},\frac{\sqrt{c}}{2})$.
	\end{proposition}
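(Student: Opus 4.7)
The plan is to deduce the spectral gap for the Hilbert module operator $D$ from an operator-level inequality $D^2\geq c/4$ on $C^*_{\textnormal{max},u}(M;L^2(S))^\Gamma$, which will follow from the Lichnerowicz formula \eqref{eq Lichnerowicz} combined with the uniform bound $\kappa\geq c$. Once established, this inequality will give $\sigma(D^2)\subseteq[c/4,\infty)$, from which the spectral gap for $D$ follows by spectral mapping within the functional calculus of Theorem \ref{thm:functionalcalculus}.

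For any $v\in\mathcal{S}_u^\Gamma$, the Hilbert module inner product $\langle\cdot,\cdot\rangle=(\cdot)^*(\cdot)$ together with the symmetricity of $D$ on this core give
\[
\langle Dv,Dv\rangle \;=\; v^*D^2v \;=\; v^*(\nabla^*\nabla)v+\tfrac{1}{4}v^*\kappa v.
\]
I would show that each term on the right is positive in $C^*_{\textnormal{max},u}(M;L^2(S))^\Gamma$ and that $\tfrac{1}{4}v^*\kappa v\geq \tfrac{c}{4}v^*v$. For the scalar curvature term, the smooth approximations $(\kappa-c+\varepsilon)^{1/2}v\in\mathcal{S}_u^\Gamma$ realize $v^*(\kappa-c+\varepsilon)v$ as a positive element via the factorization $w^*w$, and letting $\varepsilon\to 0$ yields $v^*(\kappa-c)v\geq 0$ in the max norm. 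For the connection Laplacian term, I would use a $\Gamma$-invariant partition of unity subordinate to a bounded-geometry cover by normal-coordinate charts, inside each of which $\nabla^*\nabla$ admits a local sum-of-squares decomposition in an orthonormal frame, exhibiting $v^*(\nabla^*\nabla)v$ as a sum of terms $w^*w$ with $w\in\mathcal{S}_u^\Gamma$. Closure of the positive cone then extends the inequality $\langle D^2v,v\rangle\geq(c/4)\langle v,v\rangle$ from $\mathcal{S}_u^\Gamma$ to the full domain of the regular self-adjoint operator $D^2$ from Corollary \ref{cor:selfadj}.

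Having established $D^2-c/4$ as a positive regular self-adjoint operator on the Hilbert module, the standard theory of regular self-adjoint operators on Hilbert $C^*$-modules (see \cite{Lance}) forces $\sigma(D^2)\subseteq[c/4,\infty)$, and spectral mapping for $t\mapsto t^2$ via the functional calculus of Theorem \ref{thm:functionalcalculus} gives $\sigma(D)\subseteq(-\infty,-\sqrt{c}/2]\cup[\sqrt{c}/2,\infty)$, which is exactly the claimed gap. Concretely, for any $\lambda\in(-\sqrt{c}/2,\sqrt{c}/2)$ one obtains a bounded adjointable inverse to $D-\lambda$ by applying Theorem \ref{thm:functionalcalculus}(i) to any $h_\lambda\in C_b(\mathbb{R})$ that agrees with $(t-\lambda)^{-1}$ on $\{t^2\geq c/4\}$; multiplicativity of the functional calculus on $\sigma(D)$ gives $(D-\lambda)h_\lambda(D)=h_\lambda(D)(D-\lambda)=I$. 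The main technical obstacle is ensuring that $v^*(\nabla^*\nabla)v$ is positive in the \emph{maximal} $C^*$-completion, not merely in its natural reduced representation on $L^2(S)$, since positivity can differ between these completions; the partition-of-unity decomposition must be executed so that each local contribution is a genuine element of the form $w^*w$ with $w\in\mathcal{S}_u^\Gamma$, and this is precisely where the bounded geometry hypothesis is needed to secure uniform control over the patches.
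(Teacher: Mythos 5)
Your overall strategy coincides with the paper's: establish the module inequality $\langle Dv,Dv\rangle\geq\tfrac{c}{4}\langle v,v\rangle$ on the core $\mathcal{S}_u^\Gamma$ from the Lichnerowicz formula \eqref{eq Lichnerowicz}, extend it to the domain of the closure by density, and then read off the spectral gap from self-adjointness and regularity (Corollary \ref{cor:selfadj}) via the functional calculus. You also correctly isolate the one genuinely delicate point, namely that positivity must be certified in the \emph{maximal} completion, so one needs sum-of-squares witnesses rather than positivity of an operator on $L^2(S)$. Where you diverge is in how you produce those witnesses for the term $\langle\nabla^*\nabla v,v\rangle$: you propose a partition-of-unity decomposition into local frame terms, whereas the paper uses the single global integration-by-parts identity $\langle\nabla^*\nabla w,w\rangle=(\nabla^*\nabla w)^*w=(\nabla w)^*(\nabla w)$ at the level of kernels; since the action is isometric and preserves the spin structure, $\nabla w$ is still a $\Gamma$-invariant kernel (valued in $T^*M\otimes S\boxtimes S^*$), and positivity of $(\nabla w)^*(\nabla w)$ in every $*$-representation follows by the usual linking-algebra/corner argument. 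This one identity makes your localization unnecessary, and if you do pursue the local route you should be aware of two wrinkles you have not addressed: (a) the individual local factors $\varphi_l^{1/2}\nabla_{e_i^{(l)}}v$ are \emph{not} $\Gamma$-invariant, hence not elements of $\mathcal{S}_u^\Gamma$, so they do not by themselves certify positivity for arbitrary representations of the invariant kernel algebra --- you must regroup them into $\Gamma$-invariant packets (equivariant choices of frames over orbits of patches, using the finitely many orbit types of Corollary \ref{cor:finiteisotropy}); and (b) the resulting sum over patches is infinite, and an infinite sum of positive elements is only positive in the maximal completion if you either make it converge in the maximal norm (not automatic) or recombine disjointly supported terms, via a finite coloring of the cover afforded by bounded geometry, into finitely many genuine $b^*b$ expressions. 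Your treatment of the scalar-curvature term via $(\kappa-c+\varepsilon)^{1/2}v$ and your explicit functional-calculus deduction of the gap from $D^2\geq c/4$ are fine, and in fact supply details the paper leaves implicit; the positivity step is the only place where your argument, as written, is not yet complete.
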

	\begin{proof}
	To be more precise, let us write $\overline{D}$ for the  closure of the operator $D$ on $\mathcal{S}_u^\Gamma$, as in subsection \ref{subsection hilbert operator}. By Corollary $\ref{cor:selfadj}$, the operator $\overline{D}$ is self-adjoint.  We wish to show that
	\begin{equation}
	\label{eq Dbar squared}
	\langle\overline{D} v, \overline{D} v\rangle\geq\frac{c}{4}\langle v,v\rangle
	\end{equation}
	for all $v$ in the domain of $\overline{D}$, where $\langle\,\,,\,\rangle$ denotes the $C^*_{\textnormal{max},u}(M;L^2(S))^\Gamma$-valued inner product. In fact, it suffices to show that 
	\[  \langle\overline{D} w, \overline{D} w\rangle\geq\frac{c}{4}\langle w,w\rangle \]
	for all $w\in \mathcal{S}_u^\Gamma $. Indeed, for any $v$ in the domain of $\overline{D}$, there is a sequence of elements $(w_n)_{n\in \mathbb N}$ in $\mathcal{S}_u^\Gamma$ such that  $v=\lim_{n\to \infty}w_n$ and $\overline{D} v = \lim_{n\to \infty}\overline{D} w_n$. Therefore, if we have shown
	\[ \langle \overline{D} w_n, \overline{D} w_n\rangle \geq  \frac{c}{4}\langle w_n,w_n\rangle \textup{ for all } n\in \mathbb N,  \]
	it would then follow that
	\[ \langle\overline{D} v, \overline{D} v\rangle\geq\frac{c}{4}\langle v,v\rangle.  \]
	 Now for any $w\in\mathcal{S}_u^\Gamma$, we have 
	$$\langle\nabla^*\nabla w,w\rangle=(\nabla^*\nabla w)^*w = (\nabla w)^*(\nabla w)\geq 0 \textup{ in } C^*_{\textnormal{max},u}(M;L^2(S))^\Gamma.$$
	It then follows from \eqref{eq Lichnerowicz} that
\begin{equation*}
\langle\overline{D} w, \overline{D} w\rangle = \langle D^2 w, w\rangle\geq\frac{c}{4}\langle w,  w\rangle
\end{equation*}
in  $C^*_{\textnormal{max},u}(M;L^2(S))^\Gamma$.
This finishes the proof. 
%
	\end{proof}	
	To see that this implies vanishing of $\ind_{\textnormal{max,u}}(D)$, 	
	consider the function $\chi$ on $\mathbb{R}\backslash\{0\}$ given by
	$$\chi(x)=\frac{x}{|x|}.$$ 
	Since $D$ has a spectral gap at $0$, we may use the functional calculus for regular operators on Hilbert modules to form the class $[\chi(D)]$ in \eqref{eq class}. This class lifts directly to $K_{n+1}(\mathcal{M})$, and hence its image under the boundary map vanishes in $K_n(C^*_{\textnormal{max},u}(M;L^2(S))^\Gamma)$.
	
	This yields a version of Theorem \ref{thm:main} for the uniform maximal Roe algebra:
	\begin{theorem}
		\label{thm:vanishing}
		Let $(M^n,g)$ be a complete $\Gamma$-spin Riemannian manifold with bounded Riemannian geometry. Let $\Gamma$ be a countable discrete group acting and properly and isometrically on $M$, satisfying Assumption \ref{ass:condition}. If $M$ has uniformly positive scalar curvature, then
		 $$\ind_{\textnormal{max},u}(D)=0\in K_n(C^*_{\textnormal{max},u}(M;L^2(S))^\Gamma).$$
	\end{theorem}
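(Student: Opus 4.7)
The strategy is to combine the spectral gap of $D$ just established via the Lichnerowicz formula with the Hilbert module functional calculus of Theorem \ref{thm:functionalcalculus}, in order to lift the class $[\chi(D)] \in K_{n+1}(\mathcal{M}/C^*_{\textnormal{max},u}(M;L^2(S))^\Gamma)$ defining $\ind_{\textnormal{max},u}(D)$ to an honest class in $K_{n+1}(\mathcal{M})$. Once the class is lifted, exactness of the six-term sequence recalled at the start of this section forces its image under the boundary map to be zero.

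Concretely, I would choose a normalizing function $\chi$ that is already locally constant across the gap: take any continuous odd $\chi$ with $\chi(x) = x/|x|$ for $|x| \geq \frac{\sqrt{c}}{2}$, extended continuously (arbitrarily) on $(-\frac{\sqrt{c}}{2},\frac{\sqrt{c}}{2})$. Since the proposition just proved shows that the spectrum of the regular self-adjoint operator $\overline{D}$ is disjoint from $(-\frac{\sqrt{c}}{2},\frac{\sqrt{c}}{2})$, we have $\chi^2 - 1 \equiv 0$ on the spectrum. By Theorem \ref{thm:functionalcalculus}(i), $\chi(D) \in \mathcal{M}$ then satisfies $\chi(D)^2 = \id$, so it is a genuine unitary in $\mathcal{M}$, not merely unitary modulo $C^*_{\textnormal{max},u}(M;L^2(S))^\Gamma$. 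Proposition \ref{prop:fredholmness} guarantees that the $K$-theory class $[\chi(D)]$ does not depend on the chosen normalizing function, so this particular $\chi(D)$ represents the class defining $\ind_{\textnormal{max},u}(D)$.

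This completes the plan: the image of the unitary $\chi(D) \in \mathcal{M}$ in $K_{n+1}(\mathcal{M})$ maps, under the quotient, to the index class in $K_{n+1}(\mathcal{M}/C^*_{\textnormal{max},u}(M;L^2(S))^\Gamma)$, and exactness of the six-term sequence yields
\[
\ind_{\textnormal{max},u}(D) = \partial [\chi(D)] = 0 \in K_n(C^*_{\textnormal{max},u}(M;L^2(S))^\Gamma).
\]
There is no real obstacle at this final stage: the substantive work was done earlier, in establishing regularity and essential self-adjointness of $\overline{D}$ as a Hilbert module operator (Corollary \ref{cor:selfadj}) so that the functional calculus is available, and then in pushing the Lichnerowicz identity through to the module-valued inner product to get the spectral gap. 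The main obstacle throughout the paper has been that the Lichnerowicz argument must be performed inside the maximal uniform Roe algebra rather than on $L^2(S)$; once the spectral gap holds there, the $K$-theoretic conclusion is formal.
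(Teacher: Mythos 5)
Your proposal is correct and follows essentially the same route as the paper: the Lichnerowicz identity gives the spectral gap $(-\tfrac{\sqrt{c}}{2},\tfrac{\sqrt{c}}{2})$ for the regular self-adjoint operator $\overline{D}$ on the Hilbert module $C^*_{\textnormal{max},u}(M;L^2(S))^\Gamma$, so $\chi(D)$ (with $\chi(x)=x/|x|$ off the gap) is already invertible in $\mathcal{M}$, the class $[\chi(D)]$ lifts to $K_{n+1}(\mathcal{M})$, and exactness of the six-term sequence kills $\partial[\chi(D)]=\ind_{\textnormal{max},u}(D)$. Your extra remark that one may take any continuous extension of $x/|x|$ across the gap, with $\chi^2-1$ vanishing on the spectrum, is just a slightly more explicit packaging of the same argument.
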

	\subsection{Vanishing of the maximal index}
	\hfill\vskip 0.05in
	\noindent We can now complete the proof of our main result, Theorem \ref{thm:main}. Let us first recall the definition of the maximal higher index of $D$ \cite[4.14]{GWY}. We will work in the case when the dimension $n$ is even, with the odd case being analogous.
	
	Given a normalizing function $\chi$, we can form the operator $\chi(D)$ using the functional calculus for self-adjoint operators on $L^2(S)$. Pick a locally finite $\Gamma$-invariant open cover $\{U_i\}_{i\in\mathbb{N}}$ of $M$ with the property that
	$$\sup_{i\in\mathbb{N}}\{\textnormal{diam}(U_i/\Gamma)\}<C$$
	for some $C>0$. Let $\{\phi_i\}_{i\in\mathbb{N}}$ be a continuous partition of unity subordinate to $\{U_i\}_{i\in\mathbb{N}}$. Then the sum
	$$F_D\coloneqq\sum_i\phi_i^{\frac{1}{2}}\chi(D)\phi_i^{\frac{1}{2}}$$
	defines a bounded, $\Gamma$-invariant, locally compact operator on $L^2(S)$ with finite propagation.	
Consider the matrix of bounded operators
	$$W_D=\begin{pmatrix}1&F_D\\0&1\end{pmatrix}\begin{pmatrix}1&0\\-F_D^*&1\end{pmatrix}\begin{pmatrix}1&F_D\\0&1\end{pmatrix}\begin{pmatrix}0&-1\\1&0\end{pmatrix}.$$
	Each entry of $W_D$ has finite propagation, and one verifies that
	$$P_D\coloneqq W_D\begin{pmatrix}1&0\\0&0\end{pmatrix}W_D^{-1}-\begin{pmatrix}1&0\\0&0\end{pmatrix}$$
	is a projection in $M_2(\mathbb{C}[M;L^2(S)]^\Gamma)$. We define the \emph{maximal higher index of $D$ on $L^2(S)$} to be the $K$-theoretic class of $P_D$:
	$$\ind^{L^2(S)}_{\textnormal{max}}(D)\coloneqq[P_D]\in K_0(C^*_{\textnormal{max}}(M;L^2(S))^\Gamma).$$
	Now the map $\oplus\,0\colon \mathbb{C}[M;L^2(S)]^\Gamma\rightarrow\mathbb{C}[M]^\Gamma$ 
	extends to an injective $*$-homomorphism between the maximal completions of both sides that we will still denote by
	$$\oplus\,0\colon C^*_{\textnormal{max}}(M;L^2(S))^\Gamma\rightarrow C^*_{\textnormal{max}}(M)^\Gamma.$$
	This induces a homomorphism on $K$-theory:
	$$\oplus\,0_{\,*}\colon K_0(C^*_{\textnormal{max}}(M;L^2(S))^\Gamma)\rightarrow K_0(C^*_{\textnormal{max}}(M)^\Gamma).$$ 
	The \emph{maximal higher index of $D$} is the image of $\ind^{L^2(S)}_{\textnormal{max}}(D)$ under this map: 
	$$\ind_{\textnormal{max}}(D)\coloneqq\oplus\,0_{\,*}(\ind^{L^2(S)}_{\textnormal{max}}(D))\in K_0(C^*_{\textnormal{max}}(M)^\Gamma).$$

		
	Equivalently, the elements $\ind^{L^2(S)}_{\textnormal{max}}(D)$ and $\ind_{\textnormal{max}}(D)$ can be obtained in the following way. Let $\chi'$ be a normalizing function with compactly supported Fourier transform. 
	Then $\chi'(D)$ has finite propagation, and the matrix
	$$W'_D=\begin{pmatrix}1&\chi'(D)\\0&1\end{pmatrix}\begin{pmatrix}1&0\\-\chi'(D)^*&1\end{pmatrix}\begin{pmatrix}1&\chi'(D)\\0&1\end{pmatrix}\begin{pmatrix}0&-1\\1&0\end{pmatrix}$$
	defines a projection
	$$P'_D=W'_D\begin{pmatrix}1&0\\0&0\end{pmatrix}(W'_D)^{-1}-\begin{pmatrix}1&0\\0&0\end{pmatrix}$$
	in $M_2(\mathbb{C}[M;L^2(S)])^\Gamma$. One then verifies that 
	$$\ind^{L^2(S)}_{\textnormal{max}}(D)=[P_D]=[P'_D],$$
	whence they give rise to the same element $\ind_{\textnormal{max}}(D)\in K_0(C^*_{\textnormal{max}}(M)^\Gamma)$.
	\begin{proof}[Proof of Theorem \ref{thm:main}]
		Assume, without loss of generality, that $n$ is even. By definition of the maximal equivariant uniform Roe algebra on $L^2(S)$, there is a natural inclusion $\iota\colon C^*_{\textnormal{max},u}(M;L^2(S))^\Gamma\rightarrow C^*_{\textnormal{max}}(M;L^2(S))^\Gamma.$
		The composition
		$$C^*_{\textnormal{max},u}(M;L^2(S))^\Gamma\xrightarrow{\iota}C^*_{\textnormal{max}}(M;L^2(S))^\Gamma\xrightarrow{\oplus\,0} C^*_{\textnormal{max}}(M)^\Gamma$$
		induces a composition of group homomorphisms
		$$K_0(C^*_{\textnormal{max},u}(M;L^2(S))^\Gamma)\xrightarrow{\iota_*}K_0(C^*_{\textnormal{max}}(M;L^2(S))^\Gamma)\xrightarrow{\oplus\,0\,_*}K_0(C^*_{\textnormal{max}}(M)^\Gamma).$$
		Choose a normalizing function $\chi'$ with compactly supported Fourier transform. Then $\chi'(D)$ has finite propagation, and its associated projection $P'_D$ represents $\ind_{\textnormal{max,u}}(D)$. By Theorem \ref{thm:vanishing}, uniform positive scalar curvature implies that 
		$$\ind_{\textnormal{max,u}}(D)=0.$$
		Consequently, we have 
		\begin{align*}
		\ind_{\textnormal{max}}(D)&=\oplus\,0\,_*\circ\iota_*(\ind_{\textnormal{max,u}}(D))=0.\qedhere
		\end{align*}

	\end{proof}
	\hfill\vskip 0.3in
	\bibliographystyle{abbrv}
	\bibliography{maximal.bib}
\end{document}